\DeclareMathOperator{\supp}{supp}
\newcommand{\IC}{\mathbb{C}}
\newcommand{\IR}{\mathbb{R}}
\newcommand{\question}[1]{\leavevmode{\marginpar{\tiny%
$\hbox to 0mm{\hspace*{-0.5mm}$\leftarrow$\hss}%
\vcenter{\vrule depth 0.1mm height 0.1mm width \the\marginparwidth}%
\hbox to 0mm{\hss$\rightarrow$\hspace*{-0.5mm}}$\\\relax\raggedright #1}}}
\newcommand{\ILL}{\mathscr{L}}
\newcommand{\IEE}{E}
\newcommand{\IHH}{\mathscr{H}}
\newcommand{\ISS}{\mathscr{S}}
\newcommand{\dom}{\mathrm{Dom}}
\newcommand{\loc}{\mathrm{loc}}
\newcommand{\IN}{\mathbb{N}}
\newcommand{\IX}{\mathbb{X}}
\newcommand{\IP}{\mathbb{P}}
\newcommand{\pa}{\slash\slash}
\newcommand{\Id}{  d}
\theoremstyle{plain}            
\newtheorem{theorem}{theorem}[section]
\newtheorem{Lemma}[theorem]{Lemma}
\newtheorem{Corollary}[theorem]{Corollary}
\newtheorem{Theorem}[theorem]{Theorem}
\newtheorem{Proposition}[theorem]{Proposition}
\theoremstyle{definition}
\newtheorem{Definition}[theorem]{Definition}
\newtheorem{Notation}[theorem]{Notation}
\newtheorem{Remark}[theorem]{Remark}
\newtheorem{Example}[theorem]{Example}
\begin{document}
\title[Kac regular sets and Sobolev spaces]{Kac regular sets and Sobolev spaces in geometry, probability and quantum physics}

\author[F. Bei]{Francesco Bei}
\address{Francesco Bei, Dipartimento di Matematica, Sapienza Universit\`a di Roma, 00185 Roma, Italy}
\email{ bei@mat.uniroma.it}

\author[B. G\"uneysu]{Batu G\"uneysu}
\address{Batu G\"uneysu, Institut f\"ur Mathematik, Humboldt-Universit\"at zu Berlin, 12489 Berlin, Germany} \email{gueneysu@math.hu-berlin.de}

\maketitle

\begin{abstract} Let $\Omega\subset M$ be an open subset of a Riemannian manifold $M$ and let $V:M\to \IR$ be a Kato decomposable potential. With $W^{1,2}_{0}(M;V)$ the natural form domain of the Schr\"odinger operator $-\Delta+V$ in $L^2(M)$, in this paper we study systematically the following question: Under which assumption on $\Omega$ is the statement
$$
\text{ for all $f\in W^{1,2}_{0}(M;V)$ with $f=0$ a.e. in $M\setminus \Omega$ one has $f|_\Omega\in W^{1,2}_{0}(\Omega;V)$}
$$
true for every such $V$? Generalizing a classical result by Herbst and Zhao, who treat the Euclidean $\IR^m$ and $V=0$, we prove that without any further assumptions on $V$, the above property is satisfied, if $\Omega$ is Kac regular, a probabilistic property which means that the first exit time of Brownian motion on $M$ from $\Omega$ is equal to its first penetration time to $M\setminus \Omega$. In fact, we treat more general covariant Schr\"odinger operators acting on sections in metric vector bundles, allowing new results concerning the harmonicity of Dirac spinors on singular subsets. Finally, we prove that locally Lipschitz regular $\Omega$'s are Kac regular.   
\end{abstract}

\section{Introduction}

Consider the following  three properties that an open subset $\Omega$ of a noncompact Riemannian manifold $M$ may or may not have: \\
(A) Let $V$ be a Kato decomposable potential on $M$ and let $H_M(V)$ denote the natural self-adjoint realization of $-\Delta+V$ in $L^2(M)$ and let $H_{\Omega}(V)$ denote the self-adjoint realization of $-\Delta+V$ in $L^2(\Omega)$ subject to Dirichlet boundary conditions (also the Friedrichs realization). \emph{For every such $V$ one has
\begin{align}
\exp\big(-t(H_M(V)+\infty\cdot 1_{M\setminus \Omega})\big)=\exp(-t H_{\Omega}(V))P_\Omega\quad\text{ for all $t>0$,}
\end{align}
where $\exp\big(-t(H_M(V)+\infty\cdot 1_{M\setminus \Omega})\big)$ is understood as the strong limit of $\exp\big(-t(H_M(V)+n\cdot 1_{M\setminus \Omega})\big)$ as $n\to\infty$, and where $P_\Omega:L^2(M)\to L^2(\Omega)$ denotes the natural projection.}\vspace{1mm}

(B) Assume $M$ is a geodesically complete Riemannian spin manifold. \emph{For every spin bundle $\ISS\to M$ with corresponding Dirac operator $D$ acting on sections of $\ISS\to M$, and every spinor $\Psi \in \Gamma_{L^2}(M,\ISS)$ with $D\Psi \in \Gamma_{L^2}(M,\ISS)$ and $\Psi=0$ almost everywhere in $M\setminus \Omega$, one has the implication}
$$
\text{\emph{$D^2\Psi=0$  in $\Omega$  $\Rightarrow$ $D\Psi=0$ in $\Omega$ .}}
$$
(C) \emph{The first exit time of Brownian motion from $\Omega$ equals the first penetration time of Brownian motion to $M\setminus\Omega$}. This property in particular implies\footnote{We will not assume $M$ to be stochastically complete} that with $\mathbb{P}^x$ the Riemannian Brownian motion measure with initial point $x$ and $\IX$ the coordinate process on the path space of $M$ one has
\begin{align*} 
& \{ \IX_s\in \Omega\text{\emph{ for a.e. $s\in [0,t]$ and $\IX_s\in M$ for all $s\in [0,t]$}}\}\\
&=_{\mathbb{P}^x}  \{ \IX_s\in \Omega\text{ \emph{for all $s\in [0,t]$}}\}\quad\text{\emph{ for all $x\in \Omega$, $t>0$}}.
\end{align*}
To the best of our knowledge, property (C) has been introduced by D. Stroock \cite{stroock} in the Euclidean space $\IR^m$, and is usually referred to as the \emph{Kac regularity} of $\Omega$.\vspace{2mm}


While (A), (B) and (C) seem unrelated at first glance, the main results of this paper show that the above three problems are much more correlated than one might expect: Indeed we prove that given an arbitrary open subset $\Omega\subset M$,
\begin{itemize}
\item[(i)] (A) is equivalent to 
$$
\quad\quad\quad\text{(A')}\>\>\text{ for all $f\in W^{1,2}_{0}(M;V)$ with $f=0$ a.e. in $M\setminus \Omega$ one has $f|_\Omega\in W^{1,2}_{0}(\Omega;V)$}
$$
for every $V$ as in (A); here $W^{1,2}_{0}(M;V)$ and $W^{1,2}_{0}(\Omega;V)$ denote, respectively, the form domain of $H_M(V)$ and $H_{\Omega}(V)$,
\item[(ii)] (C) is equivalent to (A),
\item[(iii)] (C) implies (B), 
\item[(iv)] If $ \Omega$ is locally Lipschitz regular (cf. Definition \ref{lipreg}), then one has (C).
\end{itemize}

While (i), (ii) and a variant of (iv) have been obtained by I.W. Herbst and Z.X. Zhao in the Euclidean space $\IR^m$ \cite{herbst} for $V=0$ (cf. Remark \ref{ende}), our analysis is the first systematic treatment of these questions on manifolds, allowing in addition potentials and in fact covariant Schr\"odinger operators. 
As we only require a very weak regularity on the potential $V$, we can also treat Schr\"odinger operators that appear naturally in quantum mechanics, having potentials with Coulomb type singularities. In fact, we are going to treat these problems within the much more general class of covariant Schr\"odinger operators that act on sections of metric vector bundles over $M$, allowing to treat magnetic fields or squares of geometric Dirac operators simultaneously, making contact with (B). Our proofs rely on various (partially new) covariant Feynman-Kac formulas.\vspace{2mm}

{\bf Acknowledgements:} The authors would like to thank Stefano Pigola, Olaf Post, Peter Stollmann and an anonymous referee for very helpful remarks that have considerably improved the paper.

\section{Main result}

In the sequel, we work in the smooth category, that is, all differential, topological and geometric data (like manifolds, bundles, metrics and covariant derivatives) are understood to be smooth. In addition, any manifold is understood to be without boundary, unless otherwise stated. Without loss of generality, we will consider only complex function spaces. Furthermore, for every manifold $M$ and every complex vector bundle $E\to M$ its fibers will be denoted with $E_x$, $x\in M$, and the corresponding space of sections of $E\to M$ having a certain regularity $\mathscr{C}$ like $\mathscr{C}=C^{\infty}_c$ etc. will be denoted with $\Gamma_{\mathscr{C}}(M,E)$. The smooth linear partial differential operator
$$
d_{\bullet}:\Gamma_{C^{\infty}}(M,\wedge^{\bullet} T^*_{\IC}M)\longrightarrow \Gamma_{C^{\infty}}(M,\wedge^{\bullet+1} T^*_{\IC}M)
$$ 
from $\wedge^{\bullet} T^*_{\IC}M\to M$ to itself denotes the exterior derivative acting on complexified differential forms.
\vspace{1mm}

Let $M$ be an arbitrary connected Riemannian $m$-manifold. As such, $M$ is equipped with its Riemannian volume measure $\mu$. Given a complex metric vector bundle $\IEE\to M$, the scalar product on the complex Hilbert space $\Gamma_{L^2}(M,\IEE)$ of Borel equivalence classes of square integrable sections of $\IEE\to M$ is simply denoted with 
$$
\left\langle f_1,f_2\right\rangle =\int (f_1,f_2) \Id\mu,
$$
with
$$
\left\|f\right\|^2=\int |f|^2 \Id\mu
$$
the induced norm. Given two such bundles $\IEE_1\to M$, $\IEE_2\to M$, the formal adjoint of a smooth linear partial differential operator $D$ mapping sections of $\IEE_1\to M$ to sections of  $\IEE_2\to M$ with respect to $\left\langle \cdot,\cdot\right\rangle $ is simply denoted with $D^{\dagger}$. The reader may find the basics of linear partial differential operators on vector bundles over Riemannian manifolds in \cite{gunbook}. The following conventions will be very convenient in the sequel:

\begin{Notation} If $\Omega\subset M$ is an open subset and $\IEE \to M$ is a complex vector bundle, we define
\begin{align*}
\Gamma_{C^{\infty}_c}(\Omega,\IEE):=\left\{f\in \Gamma_{C^{\infty}_c}(M,\IEE):\text{ $f$ is compactly supported in $\Omega$} \right\}\subset \Gamma_{C^{\infty}_c}(M,\IEE),
\end{align*}
and if $\IEE \to M$ is equipped with a metric, then
\begin{align*}
\Gamma_{L^2}(\Omega,\IEE):=\left\{f\in \Gamma_{L^2}(M,\IEE):\text{ $f=0$ $\mu$-a.e. in $M\setminus \Omega$} \right\}\subset \Gamma_{L^2}(M,\IEE).
\end{align*}
\end{Notation}

Then $\Gamma_{L^2}(\Omega,\IEE)$ is a closed subspace of $\Gamma_{L^2}(M,\IEE)$, thus a Hilbert space in itself. We denote with 
$$
P_{\Omega}: \Gamma_{L^2}(M,\IEE )\longrightarrow \Gamma_{L^2}(\Omega,\IEE )
$$
the orthogonal projection onto $\Gamma_{L^2}(\Omega,\IEE  )$. Note that $P_{\Omega}$ is nothing but the restriction map $f\mapsto f|_\Omega$. \\
Let $(t,y)\mapsto p_{\Omega}(t,x,y)$ denote the pointwise minimal solution of the heat equation on the open subset $\Omega\subset M$ with $\lim_{t\to 0+}p_{\Omega}(t,x,\cdot)=\delta_x$. We recall that the Kato class $\mathcal{K}(M)$ of $M$ is defined by all Borel functions $w$ on $M$ such that
\begin{align}\label{dre}
\lim_{t\to 0+}\sup_{x\in M}\int p_M(t,x,y) |w(y)| d\mu(y)=0.
\end{align}

Based on the results from \cite{gunbook} (cf. Chapter VII), we propose:

\begin{Definition}\label{popo} By a \emph{Kato-Schr\"odinger bundle over $M$}, we will understand a datum 
$$
(\IEE ,\nabla, V)\longrightarrow M
$$
with
\begin{itemize}
\item $\IEE\to M$ a complex metric vector bundle with finite rank; \item $\nabla$ a metric covariant derivative on $\IEE\to M$; 
\item $V:M\to \mathrm{End}(\IEE)$ is a Kato decomposable potential, that is, there exist pointwise self-adjoint Borel sections $V_{\pm}$ of $\mathrm{End}(E)\to M$ with $V_{\pm}(x)\geq 0$ for all $x\in M$ and
\begin{align}\label{pqyss}
 V=V_+-V_-, \quad  |V_+|\in L^1_{\loc}(M) , \quad |V_-|\in \mathcal{K}(M),
\end{align}
where $|\cdot|$ denotes the fiberwise operator norm.
\end{itemize}

\end{Definition}

It follows in the above situation that $|V_-|\in L^1_{\loc}(M)$ (cf. Lemma VI.3 in \cite{gunbook}) and so $|V|\in L^1_{\loc}(M)$. In view of \cite{gri}
$$
\int p_M(t,x,y) d\mu (y) \leq 1\quad \text{ for all $(t,x)\in (0,\infty)\times M$, }
$$
it follows that every bounded Borel function $w$ on $M$ satisfies (\ref{dre}), and the reader may find various (weighted) $L^q$-assumptions on $w$ that imply (\ref{dre}) in Chapter VI from \cite{gunbook}. In particular, every pointwise self-adjoint $L^1_{\mathrm{loc}}$-section $V$ of $\mathrm{End}(E)\to M$ which is bounded from below, in the sense that for some constant $C\in \IR$ one has $V(x)\geq C$ for all $x\in M$, satisfies (\ref{pqyss}). As $L^1_{\loc}(M)$ and $ \mathcal{K}(M)$ are linear spaces, the sum of two Kato decomposable potentials is also Kato decomposable, which is seen by decomposing the sum $V+W$ in the form 
$$
V+W=(V_++W_+)-(V_-+W_-).
$$ 
In particular, for $V$ Kato decomposable, $\lambda\in [0,\infty)$ and $A\subset M$ a Borel set, the potential $V+\lambda 1_A$, where $1_A$ acts as a scalar, is Kato decomposable.

\begin{Notation} In the situation of Definition \ref{popo}, let $\Omega\subset M$ be an open subset. Then $\Gamma_{W^{1,2}_0}(\Omega,\IEE;\nabla,V)$ is defined to be the space of all $f\in \Gamma_{L^2}(\Omega,\IEE;\nabla,V)$ which admit a sequence $(f_n)\subset \Gamma_{C^{\infty}_c}(\Omega,\IEE;\nabla,V)$ such that
\begin{align*}
&\left\|f_n-f\right\|\to 0\quad\text{as $n\to \infty$},\\
&\int\big(\nabla^{\dagger}\nabla (f_n-f_m),(f_n-f_m) \big)d\mu+\int \big(V(f_n-f_m),(f_n-f_m)\big) d\mu \to 0\quad\text{as $n,m\to \infty$}.
\end{align*}
By Corollary XIII.2 in \cite{gunbook} the symmetric densely defined sesquilinear form
$$
\Gamma_{C^{\infty}_c}(\Omega,\IEE)\times \Gamma_{C^{\infty}_c}(\Omega,\IEE)\ni (f_1,f_2)\longmapsto  \int( \nabla^{\dagger}\nabla f_1,f_2 )d\mu+\int  (Vf_1,f_2 ) d\mu \in \IC
$$
is closable and semibounded from below in $\Gamma_{L^2}(\Omega,\IEE)$. Its closure $\langle \cdot, \cdot \rangle_{\nabla,V,*}$ has the domain of definition $\Gamma_{W^{1,2}_0}(\Omega,\IEE;\nabla,V)$ and is explicitly given as follows: For $f, \tilde{f} \in  \Gamma_{W^{1,2}_0}(\Omega,\IEE;\nabla,V)$ with defining sequences $(f_n), (\tilde{f}_n)\subset \Gamma_{C^{\infty}_c}(\Omega,\IEE;\nabla,V)$, one has
$$
 \langle f, \tilde{f}  \rangle_{\nabla,V,*}=\lim_{n\to \infty} \left( \int( \nabla^{\dagger}\nabla f_n,\tilde{f}_n )d\mu +\int  (Vf_n,\tilde{f}_n ) d\mu\right), 
$$
and this number does not depend on the defining sequences. The semibounded from below and self-adjoint operator in $\Gamma_{L^2}(\Omega,\IEE)$ induced by $\langle \cdot, \cdot  \rangle_{\nabla,V,*}$ is denoted with $H_\Omega(\nabla,V)$. \\
In addition, $\Gamma_{W^{1,2}_0}(\Omega,\IEE;\nabla,V)$ becomes a Hilbert space with respect to the scalar product 
\begin{align*}
 \langle f_1,f_2  \rangle_{\nabla,V}:= (1+C) 
 \langle f_1,f_2 \rangle + \left\langle f_1,f_2 \right\rangle_{\nabla,V,*},
\end{align*}
where $C\geq 0$ is any constant satisfying 
\begin{align}\label{swq}
\int( \nabla^{\dagger}\nabla f, f ) d\mu+ \int (V f, f)d\mu \geq -C\left\|f\right\|^2\quad\text{ for all $ f\in \Gamma_{C^{\infty}_c}(M,\IEE)$},
\end{align}
noting that any two such constants produce equivalent scalar products.\\
Note also that $H_\Omega(\nabla,V)$ is nothing but the Friedrichs realization of $\nabla^{\dagger}\nabla +V$ in $\Gamma_{L^2}(\Omega,\IEE)$. Traditionally, if $\Omega\ne M$, one says that $H_\Omega(\nabla,V)$ is the self-adjoint realization of $\nabla^{\dagger}\nabla +V$ in $\Gamma_{L^2}(\Omega,\IEE)$ subject to Dirichlet boundary conditions. 
\end{Notation}

To simplify the notation in some special cases, we add:

\begin{Remark}\label{bemee} If $\IEE\to M$ is the trivial complex line bundle and $\nabla$ is the covariant derivative which is induced by the exterior differential, then we simply omit $\nabla$ and $\IEE$ everywhere in the notation. Note that in this case $\nabla^{\dagger}\nabla=:-\Delta$ is by definition the usual Laplace-Beltrami operator on functions. In case $V=0$ we will in addition omit $V$ everywhere in the notation. These conventions lead to natural notations such as $W^{1,2}_0(\Omega;V)$, $H_{\Omega}(V)$, $W^{1,2}_0(\Omega)$, $L^2(\Omega)$, $H_{\Omega}$. In particular, the self-adjoint operator $H_{\Omega}$ in $L^2(\Omega)$ is $-\Delta$ with Dirichlet boundary conditions and the integral kernel $\exp(-tH_\Omega)(x,y)$ of $\exp(-tH_{\Omega})$ is precisely $p_{\Omega}(t,x,y)$ \cite{gri}.
\end{Remark}

Here is an example from quantum mechanics:

\begin{Example} If $M$ is the Euclidean space $\IR^3$ and $\eta=\sum^3_{j=1}\eta_j dx^j$ is a smooth real-valued $1$-form on $\IR^3$ (considered as a magnetic potential), $Z\in\IN$, and $V:\IR^3\to  \IR$ is the Coulomb potential $V(x):=-Z/|x-x_0|$ with mass at $x_0\in\IR^3$, then with 
$$
\nabla^{\eta}: C^{\infty}(\IR^3)\longrightarrow \Gamma_{C^{\infty}}(\IR^3,T^*_{\IC}\IR^3),\quad
\nabla^{\eta}f:=\sum^3_{j=1}(\partial_j f+\sqrt{-1}f\eta_j) dx^j ,
$$
the datum 
$$
(\IR^3\times \IC, \nabla^{\eta}, V)\longrightarrow \IR^3
$$
is a Kato-Schr\"odinger bundle (cf. Proposition VI.14 in \cite{gunbook}). In this case, identifying sections in $\IR^3\times \IC\to \IR^3$ with functions on $M$, the semibounded self-adjoint operator $H_{\IR^3}(\nabla^{\eta},V)$ in $L^2(\IR^3)$ is the Hamilton operator of an atom having one electron and a nucleus having $Z$ protons, in the magnetic field given by 
$$
d\eta=\sum_{i<j} (\partial_i \eta_j-\partial_j \eta_i)dx^i \wedge dx^j.
$$
More generally, one could replace in this example $\IR^3$ with a Riemannian $3$-fold $M$ such that $p_M(t,x,y)$ satisfies a Gaussian upper bound of the form
$$
p_M(t,x,y)\leq C_1t^{-3/2}\exp\left(-\frac{d(x,y)^2}{C_2t}\right),\quad (t,x,y)\in (0,\infty)\times M\times M,
$$
taking 
$$
V(x):=-Z\int^{\infty}_0 p_M(t,x,x_0)d t
$$
to be the Coulomb potential with mass at $x_0$. One could even take the electron\rq{}s spin into account, as is shown in \cite{guneysu2}.
\end{Example}

Here is an example from geometry:

\begin{Example}\label{diracc} 1. A \emph{geometric Dirac bundle over $M$} is understood to be a datum 
$$
(\IEE;c,\nabla)\longrightarrow M
$$
  such that
\begin{itemize}
\item $\IEE\to M$ is a complex metric vector bundle;
\item  $c$ is a Clifford multiplication on $\IEE\to M$, in the sense that $c$ is a homomorphism of real (!) vector bundles $ c:  TM\to\mathrm{End}(\IEE)$, such that for all $X\in \Gamma_{C^{\infty}}(M,TM)$ one has $
c(X)=-c(X)^*$ and $c(X)^{*}c(X)=\left|X\right|^2$;
\item $\nabla$ is a Clifford connection on $(\IEE;c)\to M$, that is, $\nabla$ is a metric covariant derivative on $\IEE\to M$ such that for all $X,Y\in \Gamma_{C^{\infty}}(M,TM)$, $\Psi\in\Gamma_{C^{\infty}}(M,\IEE)$ one has
\[
 \nabla_X(c(Y)\Psi)=c(\nabla^{T M}_X Y)\Psi+ c(Y)\nabla_X \Psi. 
\]
\end{itemize}
Then the associated geometric Dirac operator is the first order linear partial differential operator on $\IEE\to M$ defined by $D(c,\nabla):=\sum^m_{j=1} c(e_j)\nabla_{e_j}$, where $e_j$ is any local orthonormal-frame for $TM\to M$ and $m=\dim M$. The operator $D(c,\nabla)$ is formally self-adjoint with symbol $c$, in particular elliptic. By the abstract Lichnerowicz formula \cite{nico,lawson},
$$
V(c,\nabla):= D(c,\nabla)^2-\nabla^{\dagger}\nabla:M\to \mathrm{End}(\IEE)
$$
is a smooth potential. In case $V(c,\nabla)$ is Kato decomposable (which is the case, e.g., if $V(c,\nabla)$ is bounded from below), we call
$$
(\IEE;c,\nabla)\longrightarrow M
$$
a \emph{geometric Kato-Dirac bundle}, and we get the Kato-Schr\"odinger bundle
$$
\big(\IEE, \nabla,V(c,\nabla) \big)\longrightarrow M.
$$
In this case, $\Gamma_{W^{1,2}_0}(\Omega,\IEE;\nabla,V(c,\nabla))$ is given by all $f\in \Gamma_{L^2}(\Omega,\IEE)$ with $D(c,\nabla)f\in \Gamma_{L^2}(\Omega,\IEE)$ in the sense of distributions, which admit a sequence $f_n\in \Gamma_{C^{\infty}_c}(\Omega,\IEE)$, $n\in\IN$, with
$$
\left\|f_n-f\right\|+\left\|D(c,\nabla)f_n-D(c,\nabla)f\right\|\to 0.
$$
We have
$$
\left\langle f_1,f_2 \right\rangle_{ \nabla,V(c,\nabla),*}=  \left\langle D(c,\nabla)f_1,D(c,\nabla)f_2 \right\rangle ,
$$
so that in particular the scalar product on $\Gamma_{W^{1,2}_0}(\Omega,\IEE;\nabla,V(c,\nabla))$ is given by 
$$
\left\langle f_1,f_2 \right\rangle_{ \nabla,V(c,\nabla)}= \left\langle f_1,f_2\right\rangle+ \left\langle D(c,\nabla)f_1,D(c,\nabla)f_2 \right\rangle .
$$
2. For example, if $M$ is a spin manifold, then every spin structure on $M$ canonically induces a geometric Dirac bundle over $M$ \cite{lawson,nico}, which is a geometric Kato-Dirac bundle, if the scalar curvature of $M$ is Kato decomposable.\\
3. Another example of a geometric Dirac bundle over $M$ (which does not require any topological assumptions on $M$) is given by taking $\IEE=\wedge T^*_{\IC}M$ with the metric induced by the Riemannian metric on $M$ and $\nabla$ the covariant derivative which is induced by the Levi-Civita connection on $M$, and 
$$
c(\alpha)\beta:=\alpha\wedge\beta - \iota_{\alpha^{\sharp}}\beta, \quad \alpha\in \Gamma_{C^{\infty}}(M,T^*_{\IC}M),\beta\in \Gamma_{C^{\infty}}(M,\wedge T^*_{\IC}M),
$$
where $\iota_{\alpha^{\sharp}}$ denotes the contraction with the vector field $\alpha^{\sharp}$ which is induced by $\alpha$ via the Riemannian metric on $M$. In this case, one can calculate that $D(c,\nabla)=d+d^{\dagger}$, so that $D(c,\nabla)^2$ is by definition the Laplace-Beltrami operator acting on differential forms. Note that the restriction of $D(c,\nabla)^2$ to $0$-forms is precisely the usual Laplace-Beltrami operator $-\Delta$ on functions, and that the restriction of $D(c,\nabla)$ to $0$-forms can be identified with the gradient (cf. Proposition 11.2.1 in \cite{nico} for detailed proofs of these facts). 
\end{Example}

Given a Kato-Schr\"odinger bundle
$$
(\IEE ,\nabla, V)\longrightarrow M,
$$
the inclusion
$$
\Gamma_{W^{1,2}_0}(\Omega,\IEE ;\nabla ,V  )\subset\big\{  f\in \Gamma_{W^{1,2}_0}(M,\IEE;\nabla,V ): f|_{M\setminus \Omega}=0\>\>\text{\rm $\mu$-a.e. }\big\}
$$
is easily seen to be satisfied without any further assumptions on the underlying data: Indeed, given $\psi\in\Gamma_{W^{1,2}_0}(\Omega,\IEE ,\nabla , V  )$ we can per definitionem pick a sequence $\psi_{n}\in\Gamma_{C^{\infty}_c}(\Omega,\IEE )$ with 
$\left\|\psi_n- \psi\right\|_{  \nabla ,V }\to 0$, so that $\left\|\psi_n- \psi\right\|\to 0$ and so $\psi=0$ $\mu$-a.e. in $M\setminus \Omega$.\vspace{2mm}

\emph{The question we address in this paper is: Under which condition on $\Omega$ does the reverse inclusion
\begin{align}\label{poa44}
\Gamma_{W^{1,2}_0}(\Omega,\IEE ;\nabla,V  )\supset\big\{  f\in \Gamma_{W^{1,2}_0}(M,\IEE;\nabla,V ):  f|_{M\setminus \Omega}=0\>\>\text{\rm $\mu$-a.e. }\big\}
\end{align}
hold true?}
\vspace{2mm}

By the above, (\ref{poa44}) is equivalent to

\begin{align}\label{poa}
\Gamma_{W^{1,2}_0}(\Omega,\IEE ;\nabla,V  )=\big\{  f\in \Gamma_{W^{1,2}_0}(M,\IEE;\nabla,V ):  f|_{M\setminus \Omega}=0\>\>\text{\rm $\mu$-a.e. }\big\}.
\end{align}

The validity of (\ref{poa}) turns out to be equivalent to the corresponding Dirichlet heat-flow of $H_{\Omega}(\nabla,V)$ on $\Omega$ being realized as the heat-flow of $H_M(\nabla,V)$ on $M$ perturbed by the \lq{}potential\rq{} $\infty\cdot 1_{M\setminus \Omega}$, for one has:

\begin{Proposition}\label{main1} Let $\Omega\subset M$ be an arbitrary open subset and let $(\IEE,\nabla,V)\to M$ be a Kato-Schr\"odinger bundle over $M$. Then one has (\ref{poa}), if and only if for all $t\geq 0$ one has 
\begin{align}\label{aposssay}
\lim_{n\to\infty} \mathrm{exp}\big(-tH_M(\nabla,V+n1_{M\setminus \Omega})\big)  =  \mathrm{exp}\big(-t H_\Omega(\nabla, V)\big)P_{\Omega}
\end{align}
strongly as bounded operators in $\Gamma_{L^2}(M,\IEE)$.
\end{Proposition}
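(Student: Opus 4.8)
The plan is to deduce the equivalence from the classical \emph{monotone convergence theorem for non-decreasing sequences of semibounded quadratic forms}. First I would note that each $n1_{M\setminus\Omega}$ is bounded and nonnegative, so that $V+n1_{M\setminus\Omega}$ is again Kato decomposable and the bounded perturbation does not alter the form domain; hence every operator $H_M(\nabla,V+n1_{M\setminus\Omega})$ is the self-adjoint operator attached to the closed form
$$
Q_n(f):=\langle f,f\rangle^{M}_{\nabla,V,*}+n\int_{M\setminus\Omega}|f|^2\Id\mu,\qquad f\in \Gamma_{W^{1,2}_0}(M,\IEE;\nabla,V),
$$
defined on the single, $n$-independent domain $\Gamma_{W^{1,2}_0}(M,\IEE;\nabla,V)$. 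On this domain the sequence $(Q_n)$ is pointwise non-decreasing in $n$ and uniformly bounded from below by $-C$ via (\ref{swq}).

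Next I would invoke the monotone convergence theorem for $(Q_n)$. Its monotone limit $Q_\infty(f):=\lim_{n\to\infty}Q_n(f)$ is again closed and semibounded, and its form domain is precisely
$$
\big\{f\in \Gamma_{W^{1,2}_0}(M,\IEE;\nabla,V):f|_{M\setminus\Omega}=0\ \text{$\mu$-a.e.}\big\},
$$
on which $Q_\infty(f)=\langle f,f\rangle^{M}_{\nabla,V,*}$, the added terms being forced to vanish in the limit. Since $\Gamma_{C^{\infty}_c}(\Omega,\IEE)$ is contained in this domain and is dense in $\Gamma_{L^2}(\Omega,\IEE)$, the $\Gamma_{L^2}(M,\IEE)$-closure of the domain equals $\Gamma_{L^2}(\Omega,\IEE)$, so the associated orthogonal projection is exactly $P_\Omega$. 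The theorem then yields, \emph{unconditionally}, the strong convergence $\lim_{n\to\infty}\mathrm{exp}(-tH_n)=\mathrm{exp}(-tH_\infty)P_\Omega$ for every $t>0$, where $H_\infty$ denotes the self-adjoint operator in $\Gamma_{L^2}(\Omega,\IEE)$ induced by $Q_\infty$.

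The crux is then to compare $Q_\infty$ with the form $\langle\cdot,\cdot\rangle_{\nabla,V,*}$ of $H_\Omega(\nabla,V)$, whose domain is $\Gamma_{W^{1,2}_0}(\Omega,\IEE;\nabla,V)$. By the elementary inclusion recalled just before the Proposition, every element of $\Gamma_{W^{1,2}_0}(\Omega,\IEE;\nabla,V)$ lies in the domain of $Q_\infty$; moreover, approximating such an element by a sequence in $\Gamma_{C^{\infty}_c}(\Omega,\IEE)\subset\Gamma_{C^{\infty}_c}(M,\IEE)$ and using that both forms are computed by the same integral expression shows that the two forms agree there. Thus the form of $H_\Omega(\nabla,V)$ is always a restriction of $Q_\infty$, and the two coincide \emph{if and only if their domains coincide}, which is exactly (\ref{poa}); equivalently, $H_\infty=H_\Omega(\nabla,V)$ holds if and only if (\ref{poa}) holds. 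Combining this with the previous paragraph settles both directions: if (\ref{poa}) holds then $H_\infty=H_\Omega(\nabla,V)$ and (\ref{aposssay}) follows; conversely, if (\ref{aposssay}) holds for all $t>0$, then restricting to $\Gamma_{L^2}(\Omega,\IEE)$, where $P_\Omega$ acts as the identity, forces the semigroups of $H_\infty$ and $H_\Omega(\nabla,V)$ to agree, whence $H_\infty=H_\Omega(\nabla,V)$ by uniqueness of the self-adjoint generator, and therefore (\ref{poa}).

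I expect the main obstacle to be the careful handling of the monotone convergence theorem: verifying that the limit form is closed, identifying its domain, and checking that the relevant orthogonal projection is precisely $P_\Omega$ rather than some larger projection. The second delicate point is to confirm that $Q_\infty$ and the $\Omega$-form genuinely take the \emph{same value} on $\Gamma_{W^{1,2}_0}(\Omega,\IEE;\nabla,V)$ — not merely that one domain contains the other — for which the density of $\Gamma_{C^{\infty}_c}(\Omega,\IEE)$ and the fact that the two forms are defined by the identical integral are decisive.
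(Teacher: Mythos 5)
Your argument is correct and is essentially the paper's own proof: your $H_\infty$ is exactly the auxiliary operator $\widetilde{H}_\Omega(\nabla,V)$ introduced in Section \ref{pon}, your second paragraph (identification of $\dom(Q_\infty)$, of $Q_\infty$ itself, and of the projection as $P_\Omega$) is precisely Proposition \ref{zbl} as an application of Theorem \ref{monn}, and your final comparison is the paper's observation that (\ref{poa}) is equivalent to $H_\Omega(\nabla,V)=\widetilde{H}_\Omega(\nabla,V)$, settled by comparing the associated closed forms and the uniqueness of the generator of a semigroup. There are no substantive differences in route or in the handling of the delicate points you flag.
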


The proof of Proposition \ref{main1} will be given in Section \ref{pon}. It is a consequence of monotone convergence results for sesquilinear forms. \\
As one might guess, the validity of (\ref{poa}) or (\ref{aposssay}) should depend on the local regularity of $\partial \Omega$. In order to make precise how (\ref{poa}) or (\ref{aposssay}) depend on the local regularity of $\partial \Omega$, the probabilistic Definition \ref{paa} below will turn out to be crucial. To this end, let $W(\hat{M})$ denote the Wiener space of continuous paths $\omega:[0,\infty)\to \hat{M}$ with its Borel-sigma-algebra $\mathcal{F}$, where $ \hat{M}=M\cup\{\infty_M\}$ is defined to be the (essentially uniquely determined) Alexandroff-compactification of $M$ in case $M$ is noncompact, and $\hat{M}:=M$ if $M$ is compact.\footnote{Note that every open subset of $M$ is also open in $\hat{M}$. In addition, the closure of a subset $\Omega$ of $M$ is always understood with respect to the topology of $M$ and not the topology of $\hat{M}$, noting that both closures coincide if and only if the closure of $\Omega$ in $M$ is a compact subset of $M$.} In any case, $W(\hat{M})$ is equipped with the topology of locally uniform convergence. Let 
$$
\IX: [0,\infty)\times W(\hat{M})\longrightarrow \hat{M},\quad \IX_t(\omega):= \omega(t)
$$
denote the coordinate process, and for every $x\in M$, the symbol $\mathbb{P}^x $ stands for the Riemannian Brownian motion measure on $\mathcal{F}$ with
$$
\mathbb{P}^x\{\IX_0=x\}=1.
$$
In other words, the transition density of $\mathbb{P}^x$ with respect to $\mu$ is given by the natural extension of $ \exp(-t H_M)(x,y)$ to $(0,\infty)\times \hat{M}\times \hat{M}$. We refer the reader to Section 8 in \cite{grio} and the references therein for the definition and the existence of $\mathbb{P}^x$.\\
With $\mathcal{F}_*$ the filtration of $\mathcal{F}$ which is generated by $\IX$, the family $\mathbb{P}^{x}$, $x\in M$, has the strong Markov property for stopping times in $\mathcal{F}_*$. If $\Omega\subset  \hat{M}$ is an open set, then\footnote{In the sequel the infimum of an empty set is understood to be $\infty$.} 
\begin{align*}
\alpha_\Omega:W(\hat{M})\longrightarrow [0,\infty],\quad \alpha_\Omega:= \inf\big\{t>0: \IX_t\in  \hat{M}\setminus \Omega\big\}
\end{align*}
denotes the first exit time of $\IX$ from $\Omega$. It is well-known that $\alpha_\Omega$ is an $\mathcal{F}_*$-stopping time. In particular, if $M$ is noncompact, then the stopping time $\alpha_M$ is the explosion time of $\mathbb{X}$, and in this case the point at infinity $\infty_M$ of $\hat{M}$ is $\mathbb{P}^x$-almost surely (a.s.) absorbing for all $x\in M$, in the sense that
$$
\mathbb{P}^x\left(\{\alpha_M=\infty\}\bigcup \{\alpha_M<\infty\>\text{ and $\mathbb{X}_t=\infty_M$  for all $t\in [\alpha_M,\infty)$} \}\right)=1\quad\text{ for all $x\in M$}.
$$

Likewise, 
\begin{align*} \beta_\Omega:= \inf\big\{t>0: \int^t_0 1_{ \hat{M}\setminus \Omega}(\IX_s)  \Id s >0\big\}:W(\hat{M})\longrightarrow [0,\infty]
\end{align*}
denotes the first penetration time of $\IX$ into $ M\setminus \Omega$. One trivially has 
\begin{align}\label{didaa}
\alpha_\Omega\leq \beta_\Omega,
\end{align}
and again $\beta_\Omega$ induces an $\mathcal{F}_*$-stopping time. Note that both $\alpha$ and $\beta$ are pathwise monotonely increasing in $\Omega$, and that one pathwise has
$$
\beta_\Omega= \inf\big\{t\geq 0: \int^t_0 1_{ \hat{M}\setminus \Omega}(\IX_s)  \Id s >0\big\}= \inf\Big\{t\geq 0: |\{s\in [0,t]:\IX_s\notin \Omega\}|>0\Big\},
$$
with $|\{\dots\}|$ the Lebesgue measure of $\{\dots\}$.  If $\omega\in W(\hat{M})$ is such that $\omega(0)\in \Omega$, then one also gets
$$
\beta_\Omega(\omega) \geq \alpha_\Omega(\omega)= \inf\big\{t\geq 0: \omega(t)\in  \hat{M}\setminus \Omega\big\}>0.
$$

The following regularity result will be very useful in the sequel:

\begin{Lemma}\label{dnaarrr} Let $\omega\in W(\hat{M})$ be such that in case $\alpha_M(\omega)<\infty$ one has $\omega(t)=\infty_M$ for all $t\in [\alpha_M(\omega),\infty)$, let $\Omega$ be an open subset of $M$, and let $\Omega_n\subset \Omega$, $n\in\IN$, be open with $\Omega_n\subset \Omega_{n+1}$ for all $n\in\IN$, $\omega(0)\in \Omega_1$ and $\bigcup_{n\in\IN}\Omega_n=\Omega$.\\
\emph{a)} One has $\alpha_{\Omega_n}(\omega)\nearrow\alpha_{\Omega }(\omega) $ as $n \to\infty$.\\
\emph{b)} Assume in addition that for each $n\in \mathbb{N}$ there exists an open subset $\Upsilon_n\subset M$ such that $\Upsilon_n\cap \Omega=\Omega_n$ and $\overline{\Omega}=\bigcup_{n\in\IN}(\overline{\Omega}\cap \Upsilon_n)$. Then one has $\beta_{\Omega_n}(\omega)\nearrow\beta_{\Omega }(\omega) $ as $n \to\infty$.
\end{Lemma}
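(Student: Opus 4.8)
The plan is to handle both parts by the same scheme: record the easy monotone convergence of the left-hand sequence to some limit $L$, and then prove the reverse inequality $L\ge\alpha_\Omega(\omega)$ (resp. $L\ge\beta_\Omega(\omega)$) by contradiction, exploiting continuity of $\omega$ and the fact that open subsets of $M$ are open in $\hat M$.

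\emph{Part a).} Since $\alpha$ is pathwise monotone in its open argument, the numbers $\alpha_{\Omega_n}(\omega)$ form a nondecreasing sequence bounded above by $\alpha_\Omega(\omega)$, so $L:=\lim_n\alpha_{\Omega_n}(\omega)$ exists and satisfies $L\le\alpha_\Omega(\omega)$; the content is the reverse inequality. I would assume $L<\alpha_\Omega(\omega)$ and derive a contradiction. Then $L<\infty$, so each $t_n:=\alpha_{\Omega_n}(\omega)$ is finite, and since $\hat M\setminus\Omega_n$ is closed in $\hat M$ while $\omega$ is continuous, the exit value obeys $\omega(t_n)\in\hat M\setminus\Omega_n$. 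On the other hand $L<\alpha_\Omega(\omega)$ gives $\omega(L)\in\Omega=\bigcup_n\Omega_n$, so $\omega(L)\in\Omega_N$ for some $N$. Because $t_n\nearrow L$, continuity of $\omega$ together with openness of $\Omega_N$ yields $\omega(t_n)\in\Omega_N\subset\Omega_n$ for all sufficiently large $n\ge N$, contradicting $\omega(t_n)\notin\Omega_n$. Hence $L=\alpha_\Omega(\omega)$.

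\emph{Part b).} Monotonicity of $\beta$ again gives $\beta_{\Omega_n}(\omega)\nearrow L\le\beta_\Omega(\omega)$, and I would assume $L<\beta_\Omega(\omega)$. If $L=\infty$ there is nothing to prove, so $L<\infty$ and I fix $L'$ with $L<L'<\beta_\Omega(\omega)$; by definition of $\beta_\Omega$ the path satisfies $\omega(s)\in\Omega$ for a.e.\ $s\in[0,L']$. First I would locate the whole trajectory on $[0,L']$: if $\omega(s_0)\in M\setminus\overline\Omega$ for some $s_0\le L'$, openness of $M\setminus\overline\Omega$ produces a time interval of positive length spent in $M\setminus\Omega$, contradicting $L'<\beta_\Omega(\omega)$; and if $\omega$ reached $\infty_M$ before $L'$ it would, by the standing hypothesis on $\omega$, stay there and again accumulate positive occupation time in $\hat M\setminus\Omega$ (indeed $\alpha_M(\omega)<\infty$ forces $\beta_\Omega(\omega)\le\alpha_M(\omega)$, so $L'<\alpha_M(\omega)$). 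Thus the image $K:=\omega([0,L'])$ is a compact subset of $\overline\Omega\subset M$.

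The decisive step, where the extra hypothesis on the $\Upsilon_n$ enters, is the conversion of the pointwise membership $\omega(s)\in\bigcup_n\Omega_n$ into membership in a single $\Omega_N$ along almost the whole path. After replacing $\Upsilon_n$ by $\Upsilon_1\cup\dots\cup\Upsilon_n$ (which preserves $\Upsilon_n\cap\Omega=\Omega_n$), the $\Upsilon_n$ form an increasing open cover of $\overline\Omega\supset K$, so compactness gives $K\subset\Upsilon_N$ for some $N$. Then every $s\in[0,L']$ with $\omega(s)\in\Omega$ satisfies $\omega(s)\in K\cap\Omega\subset\Upsilon_N\cap\Omega=\Omega_N$, so $\omega(s)\in\Omega_N$ for a.e.\ $s\in[0,L']$, i.e.\ $\int_0^{L'}1_{\hat M\setminus\Omega_N}(\omega(s))\,\Id s=0$ and hence $\beta_{\Omega_N}(\omega)\ge L'$, contradicting $\beta_{\Omega_N}(\omega)\le L<L'$. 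Therefore $L=\beta_\Omega(\omega)$. I expect this compactness-to-uniformity passage to be the main obstacle: since $K$ generally meets $\partial\Omega$, where no $\Omega_n$ reaches, it cannot be covered by the $\Omega_n$ themselves, and the hypothesis that the $\Upsilon_n$ cover $\overline\Omega$ while trimming back to $\Omega_n$ on $\Omega$ is exactly what repairs this.
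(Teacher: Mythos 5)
Your proof is correct, and for part b) it reaches the same decisive idea as the paper --- use compactness of a piece of trajectory together with the $\Upsilon_n$-hypothesis to trap the $\Omega$-portion of the path inside a single $\Omega_N$ --- but it organizes the argument quite differently. The paper splits into three cases ($\beta_\Omega(\omega)$ finite; $\beta_\Omega(\omega)=\infty$ with $\overline{\omega([0,\infty))}$ compact; $\beta_\Omega(\omega)=\infty$ with noncompact trajectory closure) and in the finite case computes $\beta_{\Omega_n}(\omega)=t_0$ exactly for all large $n$, asserting without detail the existence of $n_0$ with $\omega([0,t_0])\cap\Omega=\omega([0,t_0])\cap\Omega_{n_0}$. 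You instead run a single contradiction argument at an intermediate time $L'$ with $L<L'<\beta_\Omega(\omega)$, which unifies all cases and makes the role of the hypotheses explicit: replacing $\Upsilon_n$ by $\Upsilon_1\cup\dots\cup\Upsilon_n$ and covering the compact set $K=\omega([0,L'])\subset\overline\Omega$ is exactly the step the paper leaves implicit. Your version also only needs the one-sided bound $\beta_{\Omega_N}(\omega)\ge L'$ rather than an exact identification of $\beta_{\Omega_n}(\omega)$. A further small advantage of working strictly below $\beta_\Omega(\omega)$: in the edge case $\alpha_M(\omega)=\beta_\Omega(\omega)=t_0<\infty$ the set $\omega([0,t_0])$ contains $\infty_M$ and its intersection with $\overline\Omega$ need not be relatively compact in $M$, so the paper's choice of $n_0$ at the closed interval $[0,t_0]$ is delicate there, whereas your $K=\omega([0,L'])$ with $L'<\alpha_M(\omega)$ is automatically a compact subset of $M$. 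Part a) is the standard argument the paper omits as well-known, and your write-up of it is fine.
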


While Lemma \ref{dnaarrr} a) is well-known, a proof of Lemma \ref{dnaarrr} b) will be given in Section \ref{weqyx}.

\begin{Remark}\label{lop} If $\{\Upsilon_n:n\in\IN\}$ is a family of open subsets of $M$ such that $\Upsilon_n\subset \Upsilon_{n+1}$ for all $n\in\IN$ and $\bigcup_{n\in\IN}\Upsilon_n=M$, then the family $\Omega_n:=\Omega\cap \Upsilon_n$, $n\in\IN$, satisfies the hypothesis of Lemma \ref{dnaarrr} part b).
\end{Remark}

The following definition is motivated by Herbst/Zhao \cite{herbst} and Stroock \cite{stroock}, who treat the Euclidean space $\IR^m$:

\begin{Definition}\label{paa} An open set $\Omega\subset M$ is called \emph{Kac regular}, if one has
\begin{align}\label{apossaq}
\mathbb{P}^x \{ \alpha_\Omega =\min(\beta_\Omega,\alpha_M) \}=1\quad\text{ for all $x\in \Omega$}.
\end{align}
\end{Definition}

Note that we do not assume $M$ to be stochastically complete, that is $\alpha_M=\infty$ $\mathbb{P}^x$-a.s. for all $x\in M$, noting that in the latter case one has
$$
\mathbb{P}^x \{ \alpha_\Omega =\min(\beta_\Omega,\alpha_M) \}=\mathbb{P}^x \{ \alpha_\Omega =\beta_\Omega \},
$$
and some technical problems are not present, that we will have to deal with. The connection between problems such as (\ref{poa}) and Kac regularity is clarified in the following result, which has been established in the Euclidean space $\IR^m$ by Herbst/Zhao \cite{herbst}:

\begin{Proposition}\label{scal1} Let $\Omega\subset M$ be an arbitrary open subset. The following properties are equivalent:
\begin{itemize}
	\item $\Omega$ is Kac regular.  
	\item One has 
	\begin{align}\label{alkdd}
W^{1,2}_{0}(\Omega  )=\big\{f\in W^{1,2}_0 (M  ):  f|_{M\setminus \Omega}=0\>\>\text{\rm $\mu$-a.e. }\big\}.
\end{align}
\item For all $ t\geq 0$ one has 
\begin{align}\label{apossq}
\lim_{n\to\infty}\mathrm{exp}\big(-t H_M(   n 1_{M\setminus \Omega})\big)   =  \mathrm{exp}\big(-t H_\Omega  \big)P_\Omega
\end{align}
strongly as bounded operators in $L^2(M)$.
\end{itemize}
\end{Proposition}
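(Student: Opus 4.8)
The plan is to split the three-way equivalence into two tasks: the purely analytic equivalence of (\ref{alkdd}) and (\ref{apossq}), and the probabilistic link of either of these to Kac regularity. For the first task I would simply invoke Proposition \ref{main1} applied to the trivial datum, namely the trivial complex line bundle $\IEE=M\times\IC$ with $\nabla$ induced by $d$ and $V=0$. In this situation (using the notational conventions of Remark \ref{bemee}) the identity (\ref{poa}) reads precisely as (\ref{alkdd}), while (\ref{aposssay}) reads precisely as (\ref{apossq}); hence Proposition \ref{main1} gives (\ref{alkdd}) $\Leftrightarrow$ (\ref{apossq}) with no probability involved.

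It then remains to show that Kac regularity is equivalent to (\ref{apossq}). The starting point is the Feynman--Kac representation of the perturbed semigroups,
$$
\exp\big(-tH_M(n1_{M\setminus\Omega})\big)f(x)=\mathbb{E}^x\Big[1_{\{t<\alpha_M\}}\,\exp\Big(-n\int_0^t 1_{M\setminus\Omega}(\IX_s)\,\Id s\Big)f(\IX_t)\Big].
$$
Letting $n\to\infty$ under dominated convergence, and using the elementary fact that the continuous nondecreasing occupation functional satisfies $\int_0^t 1_{M\setminus\Omega}(\IX_s)\,\Id s=0\Leftrightarrow t\le\beta_\Omega$, the strong limit $R_t$ appearing in (\ref{apossq}) is identified as $R_tf(x)=\mathbb{E}^x[1_{\{t<\alpha_M\}}1_{\{t\le\beta_\Omega\}}f(\IX_t)]$, while the Dirichlet semigroup obeys $\exp(-tH_\Omega)P_\Omega f(x)=\mathbb{E}^x[1_{\{t<\alpha_\Omega\}}f(\IX_t)]$ for $x\in\Omega$, both operators vanishing $\mu$-a.e. on $M\setminus\Omega$.

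To compare these families cleanly I would pass to Laplace transforms in $t$, which is legitimate because both $R_\bullet$ and $\exp(-\bullet H_\Omega)P_\Omega$ are strongly continuous semigroups (the former by the monotone form-convergence underlying Proposition \ref{main1}) and the Laplace transform is injective. For $\lambda>0$ one obtains
$$
\int_0^\infty e^{-\lambda t}R_tf(x)\,\Id t=\mathbb{E}^x\Big[\int_0^{\min(\beta_\Omega,\alpha_M)}e^{-\lambda t}f(\IX_t)\,\Id t\Big],\qquad
\int_0^\infty e^{-\lambda t}\exp(-tH_\Omega)P_\Omega f(x)\,\Id t=\mathbb{E}^x\Big[\int_0^{\alpha_\Omega}e^{-\lambda t}f(\IX_t)\,\Id t\Big],
$$
where the single time $t=\beta_\Omega$ is irrelevant under the $\Id t$-integral. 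Since one always has $\alpha_\Omega\le\min(\beta_\Omega,\alpha_M)$ by (\ref{didaa}) and $\Omega\subset M$, testing against a fixed strictly positive $f\in L^2(M)$ (so that $f(\IX_t)>0$ whenever $\IX_t\in M$) shows that equality of the two resolvents for all $\lambda$ is equivalent to the integration range $[\alpha_\Omega,\min(\beta_\Omega,\alpha_M))$ being $\mathbb{P}^x$-a.s. empty, i.e. to $\mathbb{P}^x\{\alpha_\Omega=\min(\beta_\Omega,\alpha_M)\}=1$, which is exactly Kac regularity. Chaining this with the first task yields all three equivalences.

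The hard part will be probabilistic rather than analytic: one must correctly account for possible non-stochastic-completeness, which is why $\alpha_M$ enters and why the limiting indicator is $1_{\{t<\alpha_M\}}1_{\{t\le\beta_\Omega\}}$ rather than $1_{\{t<\beta_\Omega\}}$. The \emph{non-strict} inequality at $t=\beta_\Omega$ makes a term-by-term (fixed-$t$) comparison of $R_t$ and $\exp(-tH_\Omega)P_\Omega$ delicate on the boundary event $\{t=\beta_\Omega\}$, which can carry positive probability for exceptional $t$ when $\mu(\partial\Omega)>0$; the passage to the Laplace transform is precisely what integrates this single time away. I would also be careful to justify the interchange of limit and expectation in the Feynman--Kac formula via Kato decomposability, and to upgrade the conclusion from $\mu$-a.e. $x$ to \emph{every} $x\in\Omega$ using the strong Markov property together with the smoothness of the heat kernel.
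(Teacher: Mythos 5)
Your proposal is correct and its overall architecture coincides with the paper's: the equivalence of (\ref{alkdd}) and (\ref{apossq}) is obtained exactly as you propose, by specializing Proposition \ref{main1} to the scalar case, and the probabilistic link is made by comparing Feynman--Kac representations of the two limiting semigroups, which differ only in that one integrates over $\{t<\alpha_\Omega\}$ and the other over (essentially) $\{t<\min(\beta_\Omega,\alpha_M)\}$, together with $\alpha_\Omega\leq\min(\beta_\Omega,\alpha_M)$. The one genuine deviation is your passage to Laplace transforms: the paper instead works at fixed $t$, asserting the Feynman--Kac formula for the monotone limit with the \emph{strict} indicator $1_{\{t<\min(\beta_\Omega,\alpha_M)\}}$ (Proposition \ref{aposssy}), deducing $\mathbb{P}^x\{\alpha_\Omega\leq t<\min(\beta_\Omega,\alpha_M)\}=0$ for each fixed $t>0$ by testing against an exhausting sequence of indicator functions, and then letting $t$ run through $\mathbb{Q}_{>0}$. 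Your resolvent comparison buys a clean way around the boundary event $\{t=\beta_\Omega\}$, which you rightly flag as the delicate point of a fixed-$t$ comparison, at the cost of having to know beforehand that the monotone limit is a strongly continuous semigroup (which Theorem \ref{monn} does supply). Two items you defer should not be left as remarks. First, the formula $\exp(-tH_\Omega)P_\Omega f(x)=\int_{\{t<\alpha_\Omega\}}f(\IX_t)\,\Id\mathbb{P}^x$ for an \emph{arbitrary} open $\Omega$ is not off-the-shelf; the paper proves it by exhausting $\Omega$ with smooth relatively compact domains and combining Lemma \ref{dnaarrr} a) with monotone convergence of the Dirichlet heat kernels. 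Second, the upgrade from $\mu$-a.e.\ $x$ to every $x\in\Omega$ is a substantive step: the paper shows that $h(x)=\mathbb{P}^x\{\alpha_\Omega<\min(\beta_\Omega,\alpha_M)\}$ satisfies the mean-value property over exits from relatively compact subdomains via the strong Markov property, hence is harmonic and smooth in $\Omega$, hence vanishes identically because it vanishes a.e.; your one-line appeal to ``strong Markov plus smoothness'' is the right idea but needs to be carried out in essentially this form.
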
 

The proof of Proposition \ref{scal1} will be given in Section \ref{pon}. Note that in our notation $H_M(   n 1_{M\setminus \Omega})$ is the Friedrichs realization of $-\Delta+n 1_{M\setminus \Omega}$ in $L^2(M)$, and $H_\Omega $ is the Dirichlet realization of $-\Delta$ in $L^2(\Omega)$. The equivalence of (\ref{alkdd}) and (\ref{apossq}) follows immediately from specializing Proposition \ref{main1} to the scalar case. The equivalence of (\ref{apossq}) and Kac-regularity will be established by using a Feynman-Kac formula.

\begin{Remark}\label{scal2}1. The potential theoretic variant of (\ref{alkdd}) does not see anything from the geometry of $\Omega$, namely \cite{fuku}, for \emph{every open $\Omega\subset M$} one has
$$
W^{1,2}_0(\Omega  )=\big\{ f\in W^{1,2}_0(M ): \breve{f}|_{M\setminus \Omega}=0\>\> \text{\rm q.e.}\big\},
$$
where \lq{}q.e.\rq{} (quasi everywhere) is understood with respect to the capacity associated with the regular strongly local Dirichlet form 
$$
\left\langle f_1,f_2 \right\rangle_{ * }= \int (df_1,df_2) d\mu
$$
 in $L^2(M)$ with domain of definition $W^{1,2}_0(M )$, and where $\breve{f}$ denotes the quasi-continuous representative of $f$. This again reflects the subtlety of the questions under investigation.\\
2. It was conjectured in \cite{simon} by B. Simon that open subsets $\Omega\subset \IR^m$ of the Euclidean space with $\IR^m\setminus \Omega$ perfect satisfy (\ref{apossq}). A counterexample to this conjecture was given by L.I. Hedberg \cite{hedberg} and P. Stollmann \cite{stollmann1}  (cf. \cite{stollmann2}, p.127), who even show that there exists $\Omega\subset \IR^m$ open with $\IR^m\setminus \Omega$ compact and
$$
\IR^m\setminus \Omega=  \overline{\mathring{\IR^m\setminus \Omega }}
$$
such that (\ref{alkdd}) and thus (\ref{apossq}) fails. Note that this counterexample also entails that even for quasi-continuous $f\rq{}s$ the assumption $f|_{M\setminus \Omega}=0$ a.e. does not imply that $f|_{M\setminus \Omega}=0$ q.e. 
\end{Remark}

In order to formulate our main result, we add:

\begin{Definition}
\label{locallylip}
Let $\Omega\subset M$ be an open subset of $M$. We say that $\Omega$ has a \emph{locally Lipschitz boundary}, if for every $p\in \overline{\Omega}\setminus \Omega=:\partial \Omega$ there exists a pair of open neighborhoods $V$, $X$ of $p$, and a  smooth diffeomorphism $\alpha: X\rightarrow A\subset \mathbb{R}^m$ such that: 
\begin{enumerate}
\item $\overline{V}\subset X$,
\item $\overline{V}$ is compact,
\item $\alpha(V\cap \Omega)$ is a bounded Lipschitz subset of $\mathbb{R}^m$. 
\end{enumerate}
\end{Definition}
We refer for instance to \cite{daco} or to \cite{stein} for the definition of open bounded Lipschitz subset of $\mathbb{R}^m$.

\begin{Definition}\label{lipreg} An open subset $\Omega\subset M$ is called \emph{locally Lipschitz regular}, if there exists a family $\{\Omega_n: n\in\IN\}$ of open subsets of $\Omega$ having locally Lipschitz boundary and compact closure in $M$, such that for all $n\in \IN$ there exists an open subset $\Upsilon_n \subset M$ such that $\Upsilon_n\cap \Omega = \Omega_n$ and $\overline{\Omega}=\bigcup_{n\in\IN}(\overline{\Omega}\cap \Upsilon_n)$.
\end{Definition}

Note that the local Lipschitz regularity of an open subset and that an open subset has a locally Lipschitz boundary are both concepts which, by definition, do not depend on the Riemannian metric on $M$. This underlines the importance of part b) of our following main result:

\begin{Theorem}\label{main2} Let $\Omega$ be an arbitrary open subset of $M$.\\
{\rm a)} If $\Omega$ is Kac regular, then for every Kato-Schr\"odinger bundle $(\IEE,\nabla,V)\to M$ one has (\ref{poa}) and (\ref{aposssay}).\\
{\rm b)} If $\Omega$ is locally Lipschitz regular, then $\Omega$ is Kac regular.
\end{Theorem}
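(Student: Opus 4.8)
The plan for part a) is to transport the scalar theory already encoded in Propositions \ref{main1} and \ref{scal1} to an arbitrary Kato--Schr\"odinger bundle by means of a covariant Feynman--Kac--It\^o representation. By Proposition \ref{main1} it is enough to prove (\ref{aposssay}). First I would write the covariant Feynman--Kac formula for the perturbed operator in the form
\[
\exp\big(-tH_M(\nabla,V+n1_{M\setminus\Omega})\big)f(x)=\mathbb{E}^x\Big[1_{\{t<\alpha_M\}}\,e^{-n\int_0^t 1_{M\setminus\Omega}(\IX_s)\,ds}\,\mathscr{A}_t\,f(\IX_t)\Big],
\]
where $\mathscr{A}_t$ is the bounded, path-adapted operator-valued functional (stochastic parallel transport twisted by the Kato multiplicative functional built from $V$) representing $\exp(-tH_M(\nabla,V))$, and the corresponding formula for the Dirichlet semigroup is $\exp(-tH_\Omega(\nabla,V))P_\Omega f(x)=\mathbb{E}^x[1_{\{t<\alpha_\Omega\}}\mathscr{A}_t f(\IX_t)]$. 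The crucial point is that the scalar perturbation $n1_{M\setminus\Omega}$ factors out of the bundle data entirely. As $n\to\infty$ the factor $e^{-n\int_0^t 1_{M\setminus\Omega}(\IX_s)\,ds}$ decreases to $1_{\{\int_0^t 1_{M\setminus\Omega}(\IX_s)\,ds=0\}}=1_{\{t\le\beta_\Omega\}}$, and dominated convergence (the $n=0$ integrand is dominating and integrable) identifies the strong limit in (\ref{aposssay}) with $\mathbb{E}^x[1_{\{t<\alpha_M\}}1_{\{t\le\beta_\Omega\}}\mathscr{A}_t f(\IX_t)]$. Kac regularity (\ref{apossaq}), i.e. $\alpha_\Omega=\min(\beta_\Omega,\alpha_M)$ almost surely, turns this into the Dirichlet expectation; the only mismatch is the event $\{t=\beta_\Omega<\alpha_M\}$, which I would discard by invoking that the scalar limit (\ref{apossq}) of exactly this form already holds under Kac regularity by Proposition \ref{scal1}, so that this event is negligible inside such expectations. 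This yields (\ref{aposssay}), hence (\ref{poa}).

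For part b) the plan is to reduce Kac regularity of $\Omega$ to that of the relatively compact Lipschitz pieces $\Omega_n$ supplied by Definition \ref{lipreg}, and to pass to the limit using Lemma \ref{dnaarrr}. The inequality $\alpha_\Omega\le\min(\beta_\Omega,\alpha_M)$ is automatic from (\ref{didaa}) and $\alpha_\Omega\le\alpha_M$, so only $\alpha_\Omega\ge\min(\beta_\Omega,\alpha_M)$ has to be shown $\mathbb{P}^x$-almost surely for $x\in\Omega$. Fixing $x$ and dropping finitely many indices so that $x\in\Omega_1$, the exhaustion $\{\Omega_n\}$ (which I would arrange to be increasing) satisfies the hypotheses of Lemma \ref{dnaarrr} b) precisely because of the sets $\Upsilon_n$ in Definition \ref{lipreg}; hence $\alpha_{\Omega_n}\nearrow\alpha_\Omega$ and $\beta_{\Omega_n}\nearrow\beta_\Omega$, and since $\alpha_M$ is fixed also $\min(\beta_{\Omega_n},\alpha_M)\nearrow\min(\beta_\Omega,\alpha_M)$. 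Therefore, if each $\Omega_n$ satisfies $\alpha_{\Omega_n}=\min(\beta_{\Omega_n},\alpha_M)$ almost surely, then the same holds for $\Omega$ in the limit. As $\overline{\Omega_n}$ is compact, Brownian motion cannot explode before leaving $\Omega_n$, so $\alpha_{\Omega_n}<\alpha_M$ and it remains to prove the genuine boundary statement $\alpha_{\Omega_n}=\beta_{\Omega_n}$ almost surely, i.e. that a relatively compact Lipschitz domain is Kac regular.

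The heart of the matter is thus the claim that a relatively compact open $\Omega'$ with Lipschitz boundary has $\alpha_{\Omega'}=\beta_{\Omega'}$ almost surely. Applying the strong Markov property at $\alpha_{\Omega'}$ together with the pathwise identity $\beta_{\Omega'}=\alpha_{\Omega'}+\beta_{\Omega'}\circ\theta_{\alpha_{\Omega'}}$ (valid because the path lies in $\Omega'$ before $\alpha_{\Omega'}$, so no complement-time is accumulated there), this reduces to showing $\mathbb{P}^z\{\beta_{\Omega'}=0\}=1$ for the exit point $z=\IX_{\alpha_{\Omega'}}\in\partial\Omega'$. Since $\{\beta_{\Omega'}=0\}$ lies in the germ $\sigma$-field, Blumenthal's $0$--$1$ law makes this probability $0$ or $1$, so it suffices to bound $\tfrac1t\,\mathbb{E}^z\big[\int_0^t 1_{M\setminus\Omega'}(\IX_s)\,ds\big]$ below by a positive constant as $t\to0$. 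Here I would exploit the uniform exterior cone condition enjoyed by Lipschitz boundary points: working in a chart and comparing the Riemannian transition density with the Euclidean one via Gaussian short-time bounds, the probability $\mathbb{P}^z\{\IX_s\in\mathcal{C}\}$ of being in a fixed exterior cone $\mathcal{C}\subset M\setminus\Omega'$ stays bounded below for small $s$ by essentially the Euclidean solid-angle fraction, which by Brownian scaling is independent of $s$; this forces the positive lower bound and hence $\mathbb{P}^z\{\beta_{\Omega'}=0\}=1$.

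The main obstacle is exactly this last cone estimate on a manifold. Everything else is soft once the covariant Feynman--Kac formulas and Lemma \ref{dnaarrr} are in hand, but the assertion that Brownian motion started at a Lipschitz boundary point instantaneously accumulates a positive amount of Lebesgue time in the complement requires quantitative short-time control of the Riemannian heat kernel near a non-smooth boundary, a careful comparison with Euclidean Brownian motion in a chart, and the bookkeeping needed to keep the lower bound uniform in the exit location $z$. The reduction to the relatively compact pieces $\Omega_n$ is what makes this tractable and simultaneously neutralizes the possibility of explosion ($\alpha_M<\infty$), which is the technical reason for routing the argument through Lemma \ref{dnaarrr} rather than attempting it for $\Omega$ directly.
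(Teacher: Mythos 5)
Your overall architecture is sound and, for part a), essentially coincides with the paper's: reduce to (\ref{aposssay}) via Proposition \ref{main1}, represent $\exp(-tH_M(\nabla,V+n1_{M\setminus\Omega}))$ by the covariant Feynman--Kac formula, let $n\to\infty$ so that the scalar factor $e^{-n\int_0^t 1_{M\setminus\Omega}(\IX_s)\,ds}$ produces the indicator of $\{t<\min(\beta_\Omega,\alpha_M)\}$, and compare with the Dirichlet expectation using Kac regularity (this is the paper's Propositions \ref{zbl} and \ref{aposssy}). The one genuine gap is that you treat the formula $\exp(-tH_\Omega(\nabla,V))P_\Omega f(x)=\mathbb{E}^x[1_{\{t<\alpha_\Omega\}}\mathscr{A}_t f(\IX_t)]$ as standard. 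For an \emph{arbitrary} open $\Omega$ and an \emph{arbitrary} Kato-decomposable $V$ (in particular $V$ not bounded below) this is precisely the paper's new Proposition \ref{feyn1}, and its proof is not soft: one must exhaust $\Omega$ by smooth relatively compact domains and control $\exp(-tH_{\Omega_n}(\nabla,V))\to\exp(-tH_\Omega(\nabla,V))$ via monotone convergence of \emph{decreasing} forms (Theorem \ref{monn2}), and separately approximate $V$ through smooth bounded, bounded, and bounded-below stages. Without this, the comparison step in your argument has nothing to compare against. (The residual event $\{t=\beta_\Omega<\alpha_M\}$ you worry about is indeed harmless, but your justification via Proposition \ref{scal1} is the right instinct rather than a complete argument.)

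For part b) you take a genuinely different route at the core. The reduction to relatively compact Lipschitz pieces and the passage to the limit via Lemma \ref{dnaarrr} and a union bound are exactly the paper's; but where the paper proves the analytic identity $W^{1,2}_0(\Omega')=\{f\in W^{1,2}(M):f|_{M\setminus\Omega'}=0\ \text{a.e.}\}$ for relatively compact Lipschitz $\Omega'$ (Proposition \ref{bkjs}, by partition of unity, flattening charts, quasi-isometry of metrics, and the known Euclidean result for bounded Lipschitz sets) and then converts it into Kac regularity through Proposition \ref{scal1}, you argue directly on the probabilistic side: strong Markov property at $\alpha_{\Omega'}$, Blumenthal's $0$--$1$ law for $\{\beta_{\Omega'}=0\}$, and a uniform exterior cone estimate giving $\liminf_{s\to0}\mathbb{P}^z\{\IX_s\in\mathcal{C}\}>0$ from short-time local Gaussian lower bounds for $p_M$. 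This is viable (the needed heat-kernel lower bound holds locally on any manifold), and it bypasses Sobolev-space surgery entirely; what it loses is the stronger statement (\ref{asoiss}), which the paper's analytic route delivers as a byproduct and which is of independent interest, and what it costs is the quantitative boundary heat-kernel comparison that you correctly identify as the main obstacle and which you would still have to carry out in detail.
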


Using transversality theory in the smooth category one finds:

\begin{Corollary} Every open subset $\Omega\subset M$ having a smooth boundary is locally Lipschitz regular and thus Kac regular.
\end{Corollary}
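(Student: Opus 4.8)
The plan is to reduce the statement to its substantive part, namely that a smooth boundary forces $\Omega$ to be Lipschitz regular; the implication \emph{Lipschitz regular $\Rightarrow$ Kac regular} is then exactly Theorem \ref{main2} b). To produce the exhausting family required by Definition \ref{lipreg}, I would first fix a smooth proper exhaustion function $\rho\colon M\to[0,\infty)$ (which exists, since a connected Riemannian manifold is second countable, hence $\sigma$-compact), set $\Upsilon_n:=\{\rho<c_n\}$ for a sequence $c_n\nearrow\infty$ to be chosen, and put $\Omega_n:=\Upsilon_n\cap\Omega=\{x\in\Omega:\rho(x)<c_n\}$. With this choice $\Omega_n\subset\Omega$ is open, $\overline{\Omega_n}\subset\{\rho\le c_n\}$ is compact, $\Upsilon_n\cap\Omega=\Omega_n$ by construction, and $\bigcup_n\Upsilon_n=M$ gives $\bigcup_n(\overline{\Omega}\cap\Upsilon_n)=\overline{\Omega}$. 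Hence every requirement of Definition \ref{lipreg} except the Lipschitz regularity of $\partial\Omega_n$ holds automatically, and the whole problem concentrates on the latter.

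The crux is to choose the levels $c_n$ so that $\partial\Omega_n$ is Lipschitz. Its boundary decomposes into the smooth piece $\partial\Omega\cap\{\rho<c_n\}$, the piece $\Omega\cap\{\rho=c_n\}$, and the edge $\partial\Omega\cap\{\rho=c_n\}$ where the two meet. Here transversality theory enters: by Sard's theorem the critical values of $\rho$ and the critical values of the restriction $\rho|_{\partial\Omega}$ (a smooth function on the smooth embedded hypersurface $\partial\Omega$) each form a null set in $\IR$, so one can pick $c_n\nearrow\infty$ avoiding both. Being a regular value of $\rho$ makes $\{\rho=c_n\}$ a smooth hypersurface, so the second boundary piece is smooth; being a regular value of $\rho|_{\partial\Omega}$ is precisely the condition that $\{\rho=c_n\}$ meets $\partial\Omega$ transversally at every edge point, i.e.\ that the tangent space of $\partial\Omega$ is not contained in $\ker d\rho$.

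It then remains to verify that a transverse intersection of two smooth hypersurfaces produces a Lipschitz corner. At an edge point $p$ let $u$ be a local defining function for $\partial\Omega$ with $\Omega=\{u<0\}$ and $du\ne0$; transversality means $du_p,d\rho_p$ are linearly independent, so $(u,\rho-c_n)$ is a submersion near $p$ and can be taken as the first two coordinates $x_1,x_2$ of a chart. In these coordinates $\Omega_n=\{x_1<0,\,x_2<0\}$ locally, a quadrant, whose boundary after a $45^\circ$ rotation in the $(x_1,x_2)$-plane is the graph of the Lipschitz function $y_1=-|y_2|$; away from the edge $\partial\Omega_n$ is smooth. Since the Lipschitz property of the boundary is local, each $\Omega_n$ has Lipschitz boundary, whence $\Omega$ is Lipschitz regular and, by Theorem \ref{main2} b), Kac regular. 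I expect the main obstacle to be exactly this transversality step: without arranging that the cutting level sets are not tangent to $\partial\Omega$, the edge could degenerate into a cusp and destroy the Lipschitz property, so the two independent Sard applications (to $\rho$ and to $\rho|_{\partial\Omega}$) are both essential and must be combined when selecting the $c_n$.
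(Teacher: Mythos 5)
Your argument is correct and reaches the same intermediate goal as the paper --- an exhaustion of $\Omega$ by relatively compact sets of the form $\Upsilon_n\cap\Omega$ whose boundaries meet $\partial\Omega$ transversally --- but by a genuinely different technical route. The paper starts from an arbitrary exhaustion $\{W_n\}$ of $M$ by relatively compact smooth domains and invokes general transversality theory to replace the inclusions $i_n:\overline{W_n}\to M$ by nearby embeddings $j_n$ (close in the strong Whitney topology) with $j_n(\partial\overline{W_n})$ transverse to $\partial\overline{\Omega}$, citing \cite{pigo}, \cite{GuPo}, \cite{MoHi} for both the perturbation and the fact that a transverse intersection yields a Lipschitz domain. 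You instead fix a proper exhaustion function $\rho$ and apply Sard's theorem twice --- to $\rho$ and to $\rho|_{\partial\Omega}$ --- to select levels $c_n\nearrow\infty$ that are simultaneously regular values of both, which is exactly the transversality of $\{\rho=c_n\}$ with $\partial\Omega$. Your version is more elementary and self-contained (no Whitney topology or genericity theorems, only Sard), it automatically produces nested $\Upsilon_n$'s as needed in the proof of Theorem \ref{main2} b), and you supply the local verification, left implicit in the paper, that a transverse intersection of two smooth hypersurfaces produces a quadrant $\{x_1<0,\,x_2<0\}$ in suitable coordinates, hence a Lipschitz graph after a rotation. What the paper's approach buys in exchange is flexibility: it perturbs a prescribed exhaustion rather than constructing one, which is the form in which the statement is used in \cite{pigo}. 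Both proofs are complete for the corollary as stated.
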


\begin{proof} The proof of the fact that $\Omega$ is locally Lipschitz regular follows from Lemma 18 in \cite{pigo}. For the sake of completeness, we provide a sketch of proof: pick a family $\{W_n:n\in\IN\}$ of relatively compact open subsets of $M$ with smooth boundary such that $W_n\subset W_{n+1}$ for all $n\in\IN$ and $\bigcup_{n\in\IN}W_n=M$. Let $i_n:W_n\rightarrow M$ be the natural inclusion of $W_n$ in $M$. Using classical results of transversality theory in differential topology we can find for each $n\in \mathbb{N}$ a smooth embedding $j_n:\overline{W_n}\rightarrow M$ such that $j_n$ and $i_n$ are arbitrarily close in the Whitney strong topology  and $j_n(\partial\overline{W_n})$ is transverse to  $\partial\overline{\Omega}$. Altogether this tells us that $\{j_n(W_n): n\in\IN\}$ is a family of open subsets of $M$ such that $j_n(W_n)\subset j_{n+1}(W_{n+1})$ for all $n\in\IN$ and $\bigcup_{n\in\IN}j_n(W_n)=M$ in a way that $\Omega_n:=j_n(W_n)\cap \Omega$ has a locally Lipschitz boundary. It follows from Remark \ref{lop} that the set $\Omega$ becomes a locally Lipschitz regular set. We refer to \cite{GuPo} and \cite{MoHi} for classical results about transversality theory and for the definition of Whitney topology.
\end{proof}

Moreover we have the following proposition.

\begin{Proposition}
Let $\Omega\subset M$ be a locally Lipschitz regular open subset. Then $\Omega$ has locally Lipschitz boundary.
\end{Proposition}

\begin{proof}
Let $p\in \overline{\Omega}\setminus \Omega$ and let $m_0$ be such that $p\in \Upsilon_{m_0}$. Let $W$ be any open neighborhood of $p$ such that  $\overline{W}\subset \Upsilon_{m_0}$. Then it is easy to check that $W\cap (\overline{\Omega}\setminus \Omega)=W\cap (\overline{\Omega}_{m_0}\setminus \Omega_{m_0})$, where we recall that $\Upsilon_{m_0}\cap \Omega=\Omega_{m_0}$. Indeed if $q\in W\cap (\overline{\Omega}\setminus \Omega)$ then for every open neighborhood $U$ of $q$ with $U\subset W$  we have $U\cap \Omega\neq \emptyset$ and thus $U\cap \Omega_{m_0}=U\cap \Upsilon_{m_0}\cap \Omega\neq \emptyset$. Thus $q\in W\cap (\overline{\Omega}_{m_0}\setminus \Omega_{m_0})$.  On the other hand if $z\in W\cap (\overline{\Omega}_{m_0}\setminus \Omega_{m_0})$, by the fact that $W=(W\cap (\overline{\Omega}\setminus \Omega))\cup (W\cap \Omega)$ and $W\cap \Omega\subset \Omega_{m_0}$, we have necessarily that $z\in W\cap (\overline{\Omega}\setminus \Omega)$.  Now, as $\Omega_{m_0}$ has locally Lipschitz boundary, we can find a pair of open neighborhoods $V$, $X$ of $p$, and a  smooth diffeomorphism $\alpha: X\rightarrow A\subset \mathbb{R}^m$ such that $\overline{V}\subset X$, $\overline{V}$ is compact and $\alpha(V\cap \Omega_{m_0})$ is a bounded Lipschitz subset of $\mathbb{R}^m$. Moreover, without loss of generality, we can assume that $X\subset W$. Therefore, as $p$ is any point in $\overline{\Omega}\setminus \Omega$, this shows that $\Omega$ is an open subset of $M$ with locally Lipschitz boundary.
\end{proof}

We expect that also the other implication holds true; more precisely we {\bf conjecture} that every open subset of a manifold having a locally Lipschitz boundary is locally Lipschitz regular. The investigation of this property seems to require an appropriate transversality theory in the locally Lipschitz category.\vspace{3mm}

Theorem \ref{main2} a) is new even in the Euclidean case, where so far only $-\Delta$ has been treated, and not even Schr\"odinger operators $-\Delta+V$. The proof of Theorem \ref{main2} will be given in Section \ref{pon}. We continue with remarks on this proof: The proof of Theorem \ref{main2} a) relies on Proposition \ref{main1} and Proposition \ref{scal1}, while both worlds are linked through various partially new covariant Feynman-Kac formula. We believe it is a very surprising fact, that no condition on the negative part of $V$ is needed in Theorem \ref{main2} a), as such a condition is certainly required for the covariant Feynman-Kac formula to hold (roughly speaking, because Brownian paths have an infinite speed). The point here is that using rather subtle approximation arguments, one can reduce everything to the case of $V$'s that are bounded from below by a constant. \\
The proof of Theorem \ref{main2} b) relies again on Proposition \ref{scal1}: In a first step , we will give an analytic proof of the stronger statement
\begin{align}\label{asoiss}
W^{1,2}_0(\Omega  )=\big\{   f\in W^{1,2}(M  ): f|_{M\setminus \Omega}=0 \>\>\text{\rm $\mu$-a.e. } \big\},
\end{align}
under the assumption that $\Omega$ is relatively compact with $\partial \Omega$ locally Lipschitz, directly verifying (\ref{alkdd}). As usual, $W^{1,2}(M  )$ is defined to be the space of all $f\in L^2(M)$ such that $df\in\Gamma_{L^2}(M,T^*M)$ in the sense of distributions. In a second step we will then combine this local result with a probabilistic approximation argument, directly confirming (\ref{apossaq}).

\begin{Remark} \label{ende}
1. It is false that every open subset $\Omega\subset M$ with smooth boundary satisfies\footnote{But of course this statement is true by Proposition \ref{scal1} and Theorem \ref{main2} b), if $M$ is geodesically complete, as then $W^{1,2}(M )=W^{1,2}_0(M   )$ \cite{aubin}. } (\ref{asoiss}). As a counterexample, one can consider the Euclidean ball and its open upper half
$$
M:= \{(x,y,z)\in \IR^3: x^2+y^2+z^2<1\} ,\quad \Omega:=\{(x,y,z)\in M: z>0\},
$$
with the Euclidean metric.\\
2. It is shown in \cite{herbst} that in the Euclidean case $\Omega$'s with $\partial \Omega$ having the segment property are Kac regular. In fact, many comparable Euclidean results can be found in P. Stollmann\rq{}s diploma thesis \cite{stollmann1}. The segment property is more general then being locally Lipschitz, but is also a concept that does not apply to manifolds. 
\end{Remark}

In view of Example \ref{diracc}.3, the following result includes (when applied to $0$-forms) a criterion for a harmonic function $f$ on an open subset $\Omega\subset M$ with $f=0$ $\mu$-a.e. in $M\setminus \Omega$ to have vanishing gradient in $\Omega$ (cf. \cite{herbst} for the Euclidean case):

\begin{Corollary}\label{pw} Let $M$ be geodesically complete and let $(\IEE;c,\nabla)\to M$ be a geometric Kato-Dirac bundle. Then for every open subset $\Omega\subset M$ which is Kac regular, and every $\Psi$ with
\begin{align*}
&\Psi\in \Gamma_{L^2}(M, \IEE),\quad D (c,\nabla)\Psi\in \Gamma_{L^2}(M, \IEE ),\quad\text{$\Psi=0$ $\mu$-a.e. in $M\setminus \Omega$,}
\end{align*}
one has the implication
$$
D (c,\nabla)^2\Psi=0\>\>\>\text{in $\Omega$}\quad\Rightarrow  \quad\text{$D(c,\nabla)\Psi=0$ in $\Omega$},
$$
where above the action of $D (c,\nabla)^2$ and $D(c,\nabla)$ is understood in the distributional sense.
\end{Corollary}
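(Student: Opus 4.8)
The plan is to apply Theorem \ref{main2} a) to the Kato-Schr\"odinger bundle $(\IEE,\nabla,V(c,\nabla))\to M$ attached to our geometric Kato-Dirac bundle in Example \ref{diracc}, and then to exploit the fact that for this particular potential the associated form is simply $\langle f_1,f_2\rangle_{\nabla,V(c,\nabla),*}=\langle D(c,\nabla)f_1,D(c,\nabla)f_2\rangle$, whose positivity will do the rest. Thus the distributional hypothesis $D(c,\nabla)^2\Psi=0$ in $\Omega$ is to be converted into an orthogonality relation for this form, and the conclusion $D(c,\nabla)\Psi=0$ is to be read off from the vanishing of the form applied to $\Psi$ itself.

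First I would verify that $\Psi$ belongs to $\Gamma_{W^{1,2}_0}(M,\IEE;\nabla,V(c,\nabla))$. By the description of this space in Example \ref{diracc}, this means producing a sequence $\psi_n\in\Gamma_{C^{\infty}_c}(M,\IEE)$ with $\|\psi_n-\Psi\|+\|D(c,\nabla)\psi_n-D(c,\nabla)\Psi\|\to 0$. This is exactly where geodesic completeness enters: since $M$ is complete, $D(c,\nabla)$ is essentially self-adjoint on $\Gamma_{C^{\infty}_c}(M,\IEE)$, so that its maximal domain $\{f\in\Gamma_{L^2}(M,\IEE):D(c,\nabla)f\in\Gamma_{L^2}(M,\IEE)\}$ coincides with the graph closure of $\Gamma_{C^{\infty}_c}(M,\IEE)$, that is, with $\Gamma_{W^{1,2}_0}(M,\IEE;\nabla,V(c,\nabla))$. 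As $\Psi\in\Gamma_{L^2}(M,\IEE)$ with $D(c,\nabla)\Psi\in\Gamma_{L^2}(M,\IEE)$ by assumption, the required approximating sequence exists. Since in addition $\Psi=0$ $\mu$-a.e.\ in $M\setminus\Omega$ and $\Omega$ is Kac regular, Theorem \ref{main2} a) (through the identity (\ref{poa})) now yields $\Psi\in\Gamma_{W^{1,2}_0}(\Omega,\IEE;\nabla,V(c,\nabla))$.

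I would then feed the hypothesis into the form. For an arbitrary test section $\phi\in\Gamma_{C^{\infty}_c}(\Omega,\IEE)$, integrating by parts once and using that $D(c,\nabla)$ is formally self-adjoint gives
\[
\langle\Psi,\phi\rangle_{\nabla,V(c,\nabla),*}=\langle D(c,\nabla)\Psi,D(c,\nabla)\phi\rangle=\langle\Psi,D(c,\nabla)^2\phi\rangle=0,
\]
where the first equality is the form formula of Example \ref{diracc} and the last equality is precisely the distributional vanishing of $D(c,\nabla)^2\Psi$ in $\Omega$. Because $\Gamma_{C^{\infty}_c}(\Omega,\IEE)$ is dense in $\Gamma_{W^{1,2}_0}(\Omega,\IEE;\nabla,V(c,\nabla))$ in the form norm, this extends by continuity to $\langle\Psi,g\rangle_{\nabla,V(c,\nabla),*}=0$ for every $g$ in the form domain. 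Choosing $g=\Psi$, which is admissible by the previous step, I obtain $\|D(c,\nabla)\Psi\|^2_{\Gamma_{L^2}(\Omega,\IEE)}=\langle\Psi,\Psi\rangle_{\nabla,V(c,\nabla),*}=0$, hence $D(c,\nabla)\Psi=0$ in $\Omega$.

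The step I expect to be the main obstacle is the very first one, namely ensuring that $\Psi$ really lies in the global form domain $\Gamma_{W^{1,2}_0}(M,\IEE;\nabla,V(c,\nabla))$ and not merely in the maximal domain of $D(c,\nabla)$. This is the single place where geodesic completeness is indispensable, and it cannot be omitted: without essential self-adjointness of $D(c,\nabla)$ one would have no right to invoke Theorem \ref{main2} a). Everything after this reduction is formal and rests on the positivity of the Dirac-square form, with Kac regularity being used solely to license taking $g=\Psi$ as a legitimate element of the form domain on $\Omega$.
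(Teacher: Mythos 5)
Your proof is correct and follows essentially the same route as the paper: establish $\Psi\in\Gamma_{W^{1,2}_0}(M,\IEE;\nabla,V(c,\nabla))$ from geodesic completeness, pass to $\Gamma_{W^{1,2}_0}(\Omega,\IEE;\nabla,V(c,\nabla))$ via Kac regularity and Theorem \ref{main2} a), and kill $\left\|D(c,\nabla)\Psi\right\|^2$ using the positivity of the Dirac-square form. The only cosmetic difference is that you integrate by parts onto the test section $\phi$, using the distributional hypothesis directly, whereas the paper moves $D(c,\nabla)$ onto $\Psi$ after invoking local elliptic regularity to make $\Psi$ smooth in $\Omega$; both steps are valid.
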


\begin{proof} Recall that $(\IEE;c,\nabla)\to M$ induces the Kato-Schr\"odinger bundle
$$
(\IEE,\nabla, V(c,\nabla))\longrightarrow M,
$$
As $M$ is geodesically complete one has 
\begin{align}\label{helpme}
\Psi\in \Gamma_{W^{1,2}_0}\big(M, \IEE; \nabla,V(c,\nabla)\big),
\end{align}
which in view of the product rule for $D (c,\nabla)$ (p. 171 in \cite{gunbook}) and a manifold version of the Meyers-Serrin Theorem (Theorem 2.9 in \cite{guidetti}) can be seen as in the case of functions \cite{aubin}. Now (\ref{helpme}) and the Kac regularity of $\Omega$ imply
$$
\Psi \in\Gamma_{W^{1,2}_0}\big(\Omega, \IEE ,\nabla , V(c,\nabla) \big),
$$ 
and we can pick a sequence  $\Psi_n\in \Gamma_{C^{\infty}_c}(\Omega,\IEE )$ with 
$$
\lim_{n\to\infty}\|\Psi_n-\Psi  \|_{ \nabla ,V(c,\nabla)  }\>=0.
$$
It follows that 
\begin{align*}
&\big\| D(c,\nabla ) \Psi    \big\|^2 \>= \lim_{n\to\infty} \big\langle D (c,\nabla)     \Psi_n, D (c,\nabla)     \Psi     \big\rangle\\
&= \int_\Omega ( D (c,\nabla)\Psi_n,  D (c,\nabla)  \Psi    )\Id\mu =\int_\Omega ( \Psi_n,  D (c,\nabla)^2  \Psi    )\Id\mu =0,
\end{align*}
where the integration by parts is justified as $\Psi$ is smooth in $\Omega$ by local elliptic regularity, and $\Psi_n$ is smooth and compactly supported in $\Omega$.
\end{proof}

Note that in Corollary \ref{pw} the geodesic completeness of $M$ was only assumed to guarantee
$$
\Psi\in \Gamma_{W^{1,2}_0}(M, \IEE;\nabla, V(c,\nabla)),
$$
which one could assume instead.

\section{Proof of Lemma \ref{dnaarrr}}\label{weqyx}

\begin{proof}[Proof of Lemma \ref{dnaarrr}] a) This statement is well-known and elementary to check. \\
b) As this observation seems to be new, we have decided to add its proof (which is rather technical): Assume first that there exists $t_0\in (0,\infty)$ such that $\beta_\Omega(\omega) =t_0$. This means that the Lebesgue measure of the set $\{s\in [0,t_0]: \omega(s)\notin \Omega\}$ is zero, briefly 
$$
|\{s\in [0,t_0]: \omega(s)\notin \Omega\}|=0
$$
 and moreover that for each $\epsilon>0$ we have 
$$
|\{s\in [0,t_0+\epsilon]: \omega(s)\notin \Omega\}|>0.
$$
By the assumptions we can deduce the existence of a sufficiently big index $n_0>0$  such that $$\omega([0,t_0])\cap \Omega= \omega([0,t_0])\cap \Omega_{n_0}.$$ Indeed let $n_0>0$ be such that $\omega([0,t_0])\cap \overline{\Omega}\subset \bigcup_{i=1}^{n_0}(\overline{\Omega}\cap \Upsilon_i)$. Then $\omega([0,t_0])\cap \Omega\subset \bigcup_{i=1}^{n_0}(\Omega\cap \Upsilon_i)=\Omega_{n_0}$. In this way we have 
$$
|\{s\in [0,t_0]: \omega(s)\notin \Omega_n\}|=0
$$
 for each $n\geq n_0$, because 
$$
\{s\in [0,t_0]: \omega(s)\notin \Omega_n\}=\{s\in [0,t_0]: \omega(s)\notin \Omega_{n_0}\}=\{s\in [0,t_0]: \omega(s)\notin \Omega\}
$$
 and the latter has Lebesgue measure $0$. Moreover we have  
$$
|\{s\in [0,t_0+\delta]: \omega(s)\notin \Omega_n\}|>0
$$
 for each $\delta>0$ and for each $n\geq n_0$, because  
$$
\{s\in [0,t_0+\delta]: \omega(s)\notin \Omega\}\subset \{s\in [0,t_0+\delta]: \omega(s)\notin \Omega_n\}\subset \{s\in [0,t_0+\delta]: \omega(s)\notin \Omega_{n_0}\}
$$
 and 
$$
|\{s\in [0,t_0+\delta]: \omega(s)\notin \Omega\}|>0.
$$
 Therefore $\beta_{\Omega_n}(\omega)=t_0$ for each $n\geq n_0$ and hence we can conclude that $\lim \beta_{\Omega_n}(\omega)=\beta_\Omega(\omega)$ as $n\rightarrow \infty$. \\
Assume now  that there is no $t_0\in (0,\infty)$ such that $\beta_\Omega(\omega)=t_0$. Therefore 
$$
|\{s\in [0,\infty): \omega(s)\notin \Omega\}|=0.
$$
In this case we set $\beta_\Omega(\omega):=\infty$. Consider first the case where  $\overline{\omega([0,\infty))}$ is compact. Let 
$$
A:=\{s\in [0,\infty): \omega(s)\notin \Omega\}
$$
 and let $B:=[0,\infty)\setminus A$. As above we deduce the existence of an integer $n_0>0$  such that $\omega(B)\subset \Omega_{n_0}$. Since $\omega(B)\subset \Omega_{n_0}$ we have 
$$
\omega([0,\infty))\cap \Omega=\omega([0,\infty))\cap \Omega_n
$$
 for each $n\geq n_0$. Therefore
$$
|\{s\in [0,\infty): \omega(s)\notin \Omega_n\}|=0
$$
 for any $n\geq n_0$ and hence we can conclude that $\lim \beta_{\Omega_n}(\omega)=\beta_\Omega(\omega)$ as $n\rightarrow \infty$. Finally consider the case where $\overline{\omega([0,\infty))}$ is not compact. The sequence $\beta_{\Omega_n}(\omega)$ is increasing and unbounded, because given an arbitrarily big $t_0$ we can find an integer $n_0$ such that $\omega([0,t_0])\cap \Omega=\omega([0,t_0])\cap \Omega_{n_0}$. Hence $\beta_{\Omega_{n_0}}(\omega)\geq t_0$ and therefore $\lim \beta_{\Omega_n}(\omega)=\infty$ as $n\rightarrow \infty$.
 \end{proof}


\section{Proofs of main results}\label{pon}

%


We first introduce some notation that will be relevant in the sequel. Let 
$$
(\IEE,\nabla,V)\longrightarrow M
$$
be a Kato-Schr\"odinger bundle. 

\begin{Notation} Given an open subset $\Omega\subset M$ let
\begin{align}\label{aqya}
\Gamma_{\widetilde{W}^{1,2}_0}(\Omega,\IEE;\nabla,V):=\big\{  f\in \Gamma_{W^{1,2}_0}(M,\IEE ;\nabla,V):  f|_{M\setminus \Omega}=0\>\>\text{ \rm $\mu$-a.e. }\big\}\subset \Gamma_{W^{1,2}_0}(M,\IEE ;\nabla,V),
\end{align}
and let $\widetilde{H}_\Omega(\nabla,V)$ denote the self-adjoint nonnegative operator in $\Gamma_{L^2}(\Omega,\IEE )$
which corresponds to $\left\langle \cdot,\cdot\right\rangle_{\nabla,V,*}$ with domain of definition $\Gamma_{\widetilde{W}^{1,2}_0}(\Omega,\IEE;\nabla,V)$, a closed densely defined symmetric sesquilinear form in $\Gamma_{L^2}(\Omega,\IEE )$. Again we will follow the conventions from (\ref{bemee}) analogously.
\end{Notation}

The above auxiliary operator $\widetilde{H}_\Omega(\nabla,V)$ satisfies the following two important results, without any further assumptions on $\Omega$:

\begin{Proposition}\label{zbl} For every open subset $\Omega$ of $M$ and every $t\geq 0$ one has 
$$
\lim_{n\to\infty}\mathrm{exp}\big(-t H_M(\nabla,V+n1_{M\setminus \Omega})\big)=  \mathrm{exp}\big(-t \widetilde{H}_{\Omega}(\nabla,V)\big)P_{\Omega}
$$
strongly as bounded operators in $\Gamma_{L^2}(M,\IEE)$.
\end{Proposition}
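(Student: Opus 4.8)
The plan is to realize the left-hand side as the limit of a monotonically increasing sequence of quadratic forms and then to invoke the monotone convergence theorem for nondecreasing sequences of closed forms, in the version that tolerates a limit form which is \emph{not} densely defined. First I would observe that for each fixed $n$ the potential $n1_{M\setminus\Omega}$ is bounded and nonnegative, so that $V+n1_{M\setminus\Omega}$ is again Kato decomposable and adds a bounded nonnegative perturbation to the form; in particular the form domain is unchanged, equal to $\Gamma_{W^{1,2}_0}(M,\IEE;\nabla,V)$ for every $n$. Thus the closed forms
$$
Q_n(f):=\langle f,f\rangle_{\nabla,V,*}+n\int_{M\setminus\Omega}|f|^2\,\Id\mu,\qquad f\in\Gamma_{W^{1,2}_0}(M,\IEE;\nabla,V),
$$
all share this common domain and are precisely the forms of $H_M(\nabla,V+n1_{M\setminus\Omega})$. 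By (\ref{swq}) one has the \emph{uniform} lower bound $Q_n\geq Q_0\geq -C\|\cdot\|^2$, so I pass to $\widehat{Q}_n:=Q_n+C\|\cdot\|^2\geq 0$; since the semigroups of $Q_n$ and $\widehat{Q}_n$ differ only by the scalar factor $\e^{-tC}$, which cancels on both sides, it suffices to treat the nonnegative forms $\widehat{Q}_n$. As $n1_{M\setminus\Omega}$ increases in $n$, the sequence $\widehat{Q}_n$ is nondecreasing.

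Next I would identify the limit form. The monotone convergence theorem for a nondecreasing sequence of nonnegative closed forms produces the closed limit form $\widehat{Q}_\infty$ with domain $\{f:\sup_n\widehat{Q}_n(f)<\infty\}$ and value $\widehat{Q}_\infty(f)=\lim_n\widehat{Q}_n(f)$. For $f$ in the common domain one has $\widehat{Q}_n(f)=\widehat{Q}_0(f)+n\int_{M\setminus\Omega}|f|^2\,\Id\mu$, so $\sup_n\widehat{Q}_n(f)<\infty$ holds exactly when $f=0$ $\mu$-a.e.\ in $M\setminus\Omega$, and there $\widehat{Q}_\infty(f)=\langle f,f\rangle_{\nabla,V,*}+C\|f\|^2$. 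Hence the domain of $\widehat{Q}_\infty$ is precisely $\Gamma_{\widetilde{W}^{1,2}_0}(\Omega,\IEE;\nabla,V)$ and $\widehat{Q}_\infty$ is the form associated with $\widetilde{H}_\Omega(\nabla,V)+C$.

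The crux, and what I expect to be the main obstacle, is that $\widehat{Q}_\infty$ is in general \emph{not} densely defined in $\Gamma_{L^2}(M,\IEE)$, so I must use the degenerate form of the monotone convergence theorem. In that version the limiting semigroup is $\e^{-t(\widetilde{H}_\Omega(\nabla,V)+C)}$, acting on the $L^2$-closure $H_0$ of the limit form domain, \emph{composed with the orthogonal projection $P_{H_0}$ onto $H_0$}. Here I would pin down that $P_{H_0}=P_\Omega$: since $\Gamma_{\widetilde{W}^{1,2}_0}(\Omega,\IEE;\nabla,V)$ contains $\Gamma_{C^{\infty}_c}(\Omega,\IEE)$, which is dense in $\Gamma_{L^2}(\Omega,\IEE)$, one gets $H_0=\Gamma_{L^2}(\Omega,\IEE)$ and thus $P_{H_0}=P_\Omega$. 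Consequently $\e^{-t\widehat{Q}_n}\to \e^{-t(\widetilde{H}_\Omega(\nabla,V)+C)}P_\Omega$ strongly for every $t>0$, and dividing out the factor $\e^{-tC}$ gives the assertion. Everything apart from this projection-identification step, namely the stability of the form domain under bounded perturbations, the uniform semiboundedness, and the recognition of $\widehat{Q}_\infty$ on its domain, is routine bookkeeping with the Kato decomposability hypotheses.
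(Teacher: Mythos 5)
Your proposal is correct and follows essentially the same route as the paper: both apply the monotone convergence theorem for increasing closed forms (Theorem \ref{monn}) to $Q_n=\langle\cdot,\cdot\rangle_{\nabla,V,*}+n\int_{M\setminus\Omega}(\cdot,\cdot)\,d\mu$ on the common domain $\Gamma_{W^{1,2}_0}(M,\IEE;\nabla,V)$, and identify the limit form as that of $\widetilde{H}_{\Omega}(\nabla,V)$. Your shift by $C\|\cdot\|^2$ is superfluous since that theorem already covers semibounded-from-below forms, while your explicit identification of the limiting projection with $P_\Omega$ via the density of $\Gamma_{C^{\infty}_c}(\Omega,\IEE)$ in $\Gamma_{L^2}(\Omega,\IEE)$ spells out a step the paper leaves implicit.
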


\begin{proof} This is a simple application of Theorem \ref{monn}, a convergence result for quadratic forms: Regarding $n1_{M\setminus \Omega}$ as a bounded multiplication operator we set 
$$
Q_n:=\left\langle \cdot,\cdot\right\rangle_{\nabla,V+n1_{M\setminus \Omega},*}=\left\langle \cdot,\cdot\right\rangle_{\nabla,V,*}+ n\int_{M\setminus \Omega } (\cdot,\cdot) d\mu,\quad 
\dom(Q_n):= \Gamma_{ W^{1,2}_0 }(M,\IEE ;\nabla,V).
$$
Then with $S_{Q_n}$, $Q_{\infty}$, $S_{Q_\infty}$ given as in Theorem \ref{monn}, we can conclude as follows: Firstly, one has
$$
S_{Q_n}=H_M(\nabla,V+n1_{M\setminus \Omega}).
$$
It follows that $f\in \Gamma_{W^{1,2}_0 }(M,\IEE ;\nabla,V)$ is in $\dom(Q_{\infty})$ (in the notation of Theorem \ref{monn}), if and only if 
$$
\sup_{n\in\IN}n\int_{M\setminus \Omega} |f|^2 d\mu<\infty,
$$
which again is equivalent to $f=0$ in $M\setminus \Omega$ $\mu$-a.e., and for such $f$\rq{}s one has
$$
Q_{\infty}(f,f)=\lim_{n\to \infty}Q_n(f,f)=\left\langle f,f\right\rangle_{\nabla,V,*},
$$
proving $S_{Q_\infty}=\widetilde{H}_{\Omega}(\nabla,V)$ .
\end{proof}

Let us now prepare the formulation of various covariant Feynman-Kac formulae, that is, path integral representations of the semigroups corresponding to covariant Schr\"odinger operators. Note that the usual Feynman-Kac formula \cite{guneysu,hsu} is for semigroups of the form $\mathrm{exp}\big(-t H_M(\nabla,V)\big)$, and it is the aim of this section to derive formulae for the semigroups $\mathrm{exp}\big(-t \widetilde{H}_{\Omega}(\nabla,V) \big)$ and $\mathrm{exp}\big(-t H_{\Omega}(\nabla,V)\big)$. As these formulae rely on semimartingale theory and stochastic integrals (through the definition of the Stratonovich parallel transport), one needs to work here with a filtered probability, which allows to pick continuous versions of these semimartingales. We refer the reader to \cite{hsu,guneysu} for the basics of stochastic analysis on manifolds. To set the stage, suppose that for every $x\in M$ we are given an adapted Riemannian Brownian motion $\mathbb{X}(x)$ on $M$ with 
$$
\IP\{\mathbb{X}_0(x)=x\}=1,
$$ 
which is defined on some fixed filtered probability space which satisfies the usual assumptions of completeness and right-continuity. In other words, $\mathbb{X}(x)$ is an $M$-valued continuous and adapted process defined up to its explosion time, such that the law of $\IX(x)$ is $\mathbb{P}^x$. Given an open subset $\Omega\subset M$, let $\alpha_\Omega(x):=\alpha_\Omega(\IX(x))$ denote the first exit time of $\mathbb{X}(x)$ from $\Omega$, and let $\beta_\Omega(x):=\beta_\Omega(\IX(x))$ denote the penetration time of $\mathbb{X}(x)$ to $M\setminus \Omega$. Let $\pa^{\nabla}_t(x)$, $t<\alpha_M(x)$, be the Stratonovich parallel transport with respect to $\nabla$ along the paths of $\mathbb{X}(x)$, which, $\nabla$ being metric, is a random variable taking values in the unitary maps  $\IEE_{x} \to \IEE_{\IX(t)}$. So far $x\in M$ was arbitrary. On the other hand, for $\mu$-a.e. $x\in M$, we can define pathwise for $t<\alpha_M(x)$ a random variable $\mathscr{A}^{\nabla}_V(x,t)$ taking values in $\mathrm{End}( \IEE_x)$, to be the unique locally absolutely continuous solution of
\begin{align*}
&(\Id/\Id t)\mathscr{A}^{\nabla}_V(x,t)=-\mathscr{A}^{\nabla}_V(x,t)\big(\pa^{\nabla}_t(x)^{-1}V(\IX_t(x))\pa^{\nabla}_t(x)\big),\\
&\mathscr{A}^{\nabla}_V(x,0)=1_{\mathrm{End}( \IEE_{x})}.
\end{align*}

The existence and uniqueness of $\mathscr{A}^{\nabla}_V(x,t)$ is guaranteed by the $L^1_{\loc}$-assumption on $V$, which implies
$$
 \mathbb{P} \left\{|V(\IX(x))|\in L^1_{\loc}\big[0,\alpha_M(x)\big)\right\}=1\quad\text{ for $\mu$-a.e. $x\in M$,}
$$
and so
$$
\mathbb{P}\left\{\left|\pa^{\nabla}_t(x)^{-1}V(\IX_t(x))\pa^{\nabla}_t(x)\right|\in L^1_{\loc}\big[0,\alpha_M(x)\big)\right\}=1\quad\text{ for $\mu$-a.e. $x\in M$}.
$$
We refer the reader to \cite{guneysu} for detailed proofs of these facts.

\begin{Proposition}\label{aposssy} For every $\Omega\subset M$ open, $t\geq 0$, $f\in \Gamma_{L^2}(\Omega,\IEE )$, and $\mu$-a.e. $x\in M$ one has 
\begin{align}\label{feyn3}
\mathrm{exp}\big(-t \widetilde{H}_{\Omega}(\nabla,V) \big)f(x) =\int_{\{t<\min(\beta_\Omega(x), \alpha_M(x) )\}}  \mathscr{A}^{\nabla}_V(x,t)\pa^{\nabla}_t(x)^{-1}f(\IX_t(x)) \Id\mathbb{P}.  
\end{align}
\end{Proposition}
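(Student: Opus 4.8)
The plan is to obtain (\ref{feyn3}) by approximating $\widetilde{H}_{\Omega}(\nabla,V)$ with the globally defined operators $H_M(\nabla,V+n1_{M\setminus \Omega})$, for which the \emph{ordinary} covariant Feynman-Kac formula of \cite{guneysu} is available, and then letting $n\to\infty$. Since $n1_{M\setminus \Omega}$ is a bounded nonnegative scalar potential, each datum $(\IEE,\nabla,V+n1_{M\setminus \Omega})\to M$ is again a Kato-Schr\"odinger bundle, so for every $f\in\Gamma_{L^2}(\Omega,\IEE)$ and $\mu$-a.e. $x$ one has
\begin{align*}
\mathrm{exp}\big(-t H_M(\nabla,V+n1_{M\setminus \Omega})\big)f(x)=\int_{\{t<\alpha_M(x)\}}\mathscr{A}^{\nabla}_{V+n1_{M\setminus \Omega}}(x,t)\,\pa^{\nabla}_t(x)^{-1}f(\IX_t(x))\,\Id\mathbb{P}.
\end{align*}

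The first key step is to compute the perturbed multiplicative functional explicitly. Because $1_{M\setminus \Omega}$ is scalar, it commutes with $\mathscr{A}^{\nabla}_V$ and is invariant under conjugation by $\pa^{\nabla}_t(x)$, so solving the defining linear initial value problem with the ansatz of a scalar prefactor gives
\begin{align*}
\mathscr{A}^{\nabla}_{V+n1_{M\setminus \Omega}}(x,t)=\mathrm{exp}\Big(-n\int^t_0 1_{M\setminus \Omega}(\IX_s(x))\,\Id s\Big)\,\mathscr{A}^{\nabla}_V(x,t),
\end{align*}
which I would verify by differentiating the right-hand side and invoking uniqueness. The passage to the limit then proceeds for fixed $x$ by dominated convergence: since $\pa^{\nabla}_t(x)$ is unitary and the scalar prefactor is bounded by $1$, the integrands are dominated in fibre norm by $|\mathscr{A}^{\nabla}_V(x,t)|\,|f(\IX_t(x))|$, which is $\mathbb{P}$-integrable for $\mu$-a.e. $x$ by the integrability estimates underlying the Feynman-Kac formula for $V$ (the $L^1_{\loc}$- and Kato-class hypotheses of Definition \ref{popo}). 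Pathwise,
\begin{align*}
\lim_{n\to\infty}\mathrm{exp}\Big(-n\int^t_0 1_{M\setminus \Omega}(\IX_s(x))\,\Id s\Big)=1_{\{\int^t_0 1_{M\setminus \Omega}(\IX_s(x))\,\Id s=0\}}=1_{\{t\le \beta_\Omega(x)\}},
\end{align*}
where the last identity uses that $s\mapsto\int^s_0 1_{M\setminus \Omega}(\IX_u(x))\,\Id u$ is continuous and nondecreasing, hence vanishes exactly on $[0,\beta_\Omega(x)]$.

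This produces $\int_{\{t\le\min(\beta_\Omega(x),\alpha_M(x))\}}\mathscr{A}^{\nabla}_V(x,t)\pa^{\nabla}_t(x)^{-1}f(\IX_t(x))\,\Id\mathbb{P}$ as the $n\to\infty$ limit for a.e. $x$. On the analytic side, Proposition \ref{zbl} gives $\mathrm{exp}(-t H_M(\nabla,V+n1_{M\setminus \Omega}))f\to \mathrm{exp}(-t\widetilde{H}_{\Omega}(\nabla,V))P_\Omega f=\mathrm{exp}(-t\widetilde{H}_{\Omega}(\nabla,V))f$ strongly in $\Gamma_{L^2}(M,\IEE)$ (using $P_\Omega f=f$); passing to a subsequence converging $\mu$-a.e. identifies the $L^2$-limit with the pointwise limit, giving the desired equality for a.e. $x$ and $t>0$, the case $t=0$ being the trivial identity.

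The one remaining point, and the place where I expect the only genuine subtlety, is upgrading $\{t\le\beta_\Omega\}$ to the strict inequality of (\ref{feyn3}). The two domains differ by the event $\{\beta_\Omega(x)=t<\alpha_M(x)\}$. On it, $\int^t_0 1_{M\setminus \Omega}(\IX_s(x))\,\Id s=0$ forces $\IX_t(x)\in\overline{\Omega}$ by path-continuity, while $\IX_t(x)\in\Omega$ would give $\beta_\Omega(x)>t$; hence $\IX_t(x)\in\partial\Omega\subset M\setminus\Omega$. Since $f=0$ $\mu$-a.e. on $M\setminus\Omega$ and the law of $\IX_t(x)$ is absolutely continuous with respect to $\mu$ (transition density $p_M(t,x,\cdot)$), we get $f(\IX_t(x))=0$ $\mathbb{P}$-a.s. on this event, so its contribution vanishes and the two domains of integration yield the same value. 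This closes the argument.
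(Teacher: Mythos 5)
Your proof follows essentially the same route as the paper's: approximate by $H_M(\nabla,V+n1_{M\setminus \Omega})$, apply the standard covariant Feynman--Kac formula (with the multiplicative functional factoring off the scalar term), identify the $L^2$-limit via Proposition \ref{zbl} along an a.e.-convergent subsequence, and pass to the limit under the integral by dominated convergence using the domination (\ref{domii}). You are in fact slightly more careful than the paper at one point: the pathwise identity is $\big\{\int_0^t 1_{M\setminus \Omega}(\IX_s)\,\Id s=0\big\}=\{t\le \beta_\Omega\}$ rather than $\{t<\beta_\Omega\}$ as asserted there, and your additional observation that the event $\{\beta_\Omega=t\}$ contributes nothing (since $\IX_t\in\partial\Omega$ on it and $f$ vanishes $\mu$-a.e. off $\Omega$, while the law of $\IX_t(x)$ is absolutely continuous with respect to $\mu$) correctly closes this small gap.
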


Note that the expectation in (\ref{feyn3}) is on the fiber $\IEE_x$, as $f(\IX_t(x))\in \IEE_{\IX_t(x)}$ in $\{t<\alpha_M(x)\}$. In the scalar case we get

\begin{align}\label{pqvr3}
\mathrm{exp}\big(-t \widetilde{H}_{\Omega}(w)\big)f(x)=\int_{\{t<\min(\beta_\Omega(x),\alpha_M(x) )\}}   \mathrm{exp}\big(-\int^t_0 w(\IX_s)ds\big)f(\IX_t)\Id\mathbb{P} .
\end{align}

\begin{proof}[Proof of Proposition \ref{aposssy}] We start by recalling the usual covariant Feynman-Kac formula \cite{guneysu} 
$$
\mathrm{exp}\big(-t H_M(\nabla,W)\big)f(x)=\int_{\{t< \alpha_M(x)\}}  \mathscr{A}^{\nabla}_W(x,t)\pa^{\nabla}_t(x)^{-1}f(\IX_t(x)) \Id\mathbb{P},
$$
which is valid, for example if the potential $W$ is in $L^2_{\loc}(M)$ is such that for some constant $A\in\IR$ and all $x\in M$, the eigenvalues of $W(x)\in\mathrm{End}(\IEE_x)$ are $\geq A$. In particular, this statement includes that 
\begin{align}\label{domii}
\int_{\{t< \alpha_M(x)\}} \left| \mathscr{A}^{\nabla}_W(x,t)\pa^{\nabla}_t(x)^{-1}f(\IX_t(x))\right| \Id\mathbb{P}<\infty.
\end{align}

Applying this with $W=V+n1_{M\setminus \Omega}$ and using Proposition \ref{zbl} we have, possibly after taking a subsequence of $$
\mathrm{exp}\big(-t H_M(\nabla,V+n1_{M\setminus \Omega})\big)f$$ 
if necessary, that for $\mu$-a.e. $x\in M$ it holds that
\begin{align*}
 &\mathrm{exp}\big(-t \widetilde{H}_{\Omega}(\nabla,V)\big)f(x)=\lim_{n\to\infty}\mathrm{exp}\big(-t H_M(\nabla,V+n1_{M\setminus \Omega})\big)f(x)\\
&=\lim_{n\to\infty}\int_{\{t< \alpha_M(x)\}}  \exp\Big(-n\int^t_01_{M\setminus \Omega}(\IX_s(x)) ds\Big)\mathscr{A}^{\nabla}_V(x,t)\pa^{\nabla}_t(x)^{-1}f(\IX_t(x)) \Id\mathbb{P}.
\end{align*}
For paths from the set
$$
\{t< \alpha_M(x)\}=_{\IP}\{\IX_s(x)\in M\text{ for all $s\in [0,t]$}\}
$$
 one has 
$$
\int^t_01_{\hat{M}\setminus \Omega}(\IX_s(x)) ds=\int^t_01_{M\setminus \Omega}(\IX_s(x)) ds,
$$
and $\int^t_01_{M\setminus \Omega}(\IX_s(x)) ds=0$ is equivalent to $t<\beta_\Omega(x)$. Thus, $\IP$-a.s. in $\{t< \alpha_M(x)\}$ one has 
$$
\lim_{n\to\infty}\exp\Big(-n\int^t_01_{M\setminus \Omega}(\IX_s(x)) ds\Big)=1_{\{t< \beta_\Omega(x)\}}
$$
and 
\begin{align*}
&\lim_{n\to\infty}\int_{\{t< \alpha_M(x)\}}  \exp\Big(-n\int^t_01_{M\setminus \Omega}(\IX_s(x)) ds\Big)\mathscr{A}^{\nabla}_V(x,t)\pa^{\nabla}_t(x)^{-1}f(\IX_t(x)) \Id\mathbb{P}\\
&=\int_{\{t<\min(\beta_\Omega(x),\alpha_M(x) )\}} \mathscr{A}^{\nabla}_V(x,t)\pa^{\nabla}_t(x)^{-1}f(\IX_t(x)) \Id\mathbb{P}
\end{align*}
follows from dominated convergence, in view of (\ref{domii}).
\end{proof}

\subsection{Proof of Proposition \ref{main1}}

\begin{proof}[Proof of Proposition \ref{main1}]
With the preparations made, there is almost nothing left to prove: Note first that (\ref{poa}) is equivalent to

\begin{align}\label{hipo}
H_\Omega(\nabla,V)=\widetilde{H}_\Omega(\nabla,V).
\end{align}
\emph{(\ref{poa}) $\Rightarrow$ (\ref{aposssay}):} We have (\ref{hipo}), so that (\ref{aposssay}) follows from Proposition \ref{zbl}.\\
\emph{(\ref{aposssay}) $\Rightarrow$  (\ref{poa}):} 
(\ref{aposssay}) and Proposition \ref{zbl} imply
$$
\exp(-tH_\Omega(\nabla,V))=\exp(-t\widetilde{H_\Omega}(\nabla,V))\quad\text{ for all $t>0$,}
$$
showing (\ref{hipo}).

\end{proof}

\subsection{Proof of Proposition \ref{scal1}}

\begin{proof}[Proof of Proposition \ref{scal1}]

Note first that one has the Feynman-Kac formula 
$$
\exp(-tH_\Omega )f(x) = \int_{\{t<\alpha_\Omega\}}f(\IX_t) \Id\mathbb{P}^x
$$
for all $f\in L^2(\Omega)$, $t>0$,  $x\in \Omega$, meaning also that the right hand side is the smooth representative of $\exp(-tH_\Omega )f$. In case $\Omega$ is relatively compact with smooth boundary this is well-known. For the general case we can assume $f\geq 0$. Fix $x\in \Omega$ and exhaust $\Omega$ with a sequence of relatively compact smooth $\Omega_n$'s with $x\in \Omega_1$. Then the latter formula is valid with $\Omega$ replaced with $\Omega_n$, and we can take $n\to\infty$ using monotone convergence for integrals on the probabilistic side, noting that $\alpha_{\Omega_n}\nearrow \alpha_\Omega$ by Lemma \ref{dnaarrr} a), and the fact that (cf. p.213 in \cite{gri})
$$
\exp(-tH_{\Omega_n} )f(x)\nearrow \exp(-tH_{\Omega} )f(x).
$$
 Let us come to the actual proof of Proposition \ref{scal1}. Being equipped with the previously established results, we can follow the Euclidean proof from \cite{herbst} from here on:\\
As we have already remarked, the equivalence of (\ref{alkdd}) and (\ref{apossq}) follows immediately from Proposition \ref{main1}. \vspace{1mm}

\emph{(\ref{alkdd}) $\Rightarrow$ Kac-regularity:} (\ref{alkdd}) implies
\begin{align}\label{rtl2}
\widetilde{H}_\Omega=H_\Omega ,
\end{align}
by comparing the associated forms. In particular, the semigroups coincide and the Feynman-Kac formulae for the semigroups imply the first identity in
$$
0=\int(1_{\{t< \min(\beta_\Omega,\alpha_M) \}} -1_{\{t<\alpha_\Omega\}})f(\IX_t) \Id\mathbb{P}^x= \int_{\{ \alpha_\Omega\leq t< \beta_\Omega \}} f(\IX_t) \Id\mathbb{P}^x
$$
for all $f\in L^2(\Omega)$, $t>0$, $\mu$-a.e. $x\in \Omega$, where the second equality follows from
$$
1_{\{ \alpha_\Omega\leq t< \min(\beta_\Omega,\alpha_M) \}}=1_{\{t< \min(\beta_\Omega,\alpha_M) )\}}-1_{\{t<\alpha_\Omega\}\cap \{t< \min(\beta_\Omega,\alpha_M) \}}
$$
and $\alpha_\Omega\leq\beta_\Omega$ and $\alpha_\Omega\leq \alpha_M$. Letting $f$ run through $f_n:=1_{A_n}$, where $\{A_n:n\in\IN\}$ is a family of compact subsets of $M$ such that $A_n\subset A_{n+1}$ for all $n\in\IN$ and $\bigcup_{n\in\IN}A_n=M$, we get 
$$
\mathbb{P}^x\{ \alpha_\Omega\leq t< \min(\beta_\Omega,\alpha_M) \}=0\quad\text{ for all $t>0$, $\mu$-a.e. $x\in \Omega$}.
$$
Letting $t$ run through $\mathbb{Q}_{>0}$, 
\begin{align}\label{apaaoaaws}
\IP^{x}\{\alpha_\Omega< \min(\beta_\Omega,\alpha_M) \}=0\quad\text{ for $\mu$-a.e. $x\in \Omega$}.
\end{align}
In view of $\alpha_\Omega\leq\beta_\Omega$, the proof of the Kac-regularity of $\Omega$ is complete, once we have shown that the bounded function $h(x):= \IP^{x}\{\alpha_\Omega< \min(\beta_\Omega,\alpha_M) \}$ is continuous in $\Omega$. We are going to prove that for all open relatively compact subsets $D\subset \Omega$ with $\overline{D}\subset \Omega$ one has 
\begin{align}\label{apoaas}
\int h(\mathbb{X}_{\alpha_D}) d \IP^x =h(x)\text{ for all $x\in D$,}
\end{align}
which implies that $x\mapsto \IP^{x}\{\alpha_\Omega< \min(\beta_\Omega,\alpha_M) \}$ is harmonic and thus smooth in $\Omega$. To see (\ref{apoaas}), note first that $\alpha_D<\alpha_{\Omega}\leq \alpha_M$ $\IP^x$-a.s., as the transition density of Brownian motion is continuous. Now we can calculate
\begin{align*}
&\int h(\mathbb{X}_{\alpha_D})  d \IP^x\\
&=\int \IP^{\IX_{\alpha_D} }\{\alpha_\Omega< \min(\beta_\Omega,\alpha_M) \}d\IP^{x}\\
&= \int 1_{\{\alpha_\Omega< \min(\beta_\Omega,\alpha_M) \}}\big(\omega(\alpha_D(\omega)+\bullet)\big)d\IP^{x}(\omega)\\
&=  \int1_{\{\alpha_\Omega+\alpha_D< \min(\beta_\Omega,\alpha_M)+\alpha_D \}}(\omega)d\IP^{x}(\omega)\\
&=\int1_{\{\alpha_\Omega< \min(\beta_\Omega,\alpha_M) \}}(\omega)d\IP^{x}(\omega)\\
&=h(x),
\end{align*}
where the first inequality holds by definition, the second equality holds by the strong Markov property of Brownian motion, the third equality holds by elementary considerations, the fourth equality is trivial and and the last equality again holds by definition. \vspace{1mm}

\emph{Kac-regularity $\Rightarrow$ (\ref{alkdd}):} By the corresponding Feynman-Kac formulae we immediately find that Kac-regularity implies
$$
\exp(-t\widetilde{H}_\Omega)=\exp(-tH_\Omega )\quad\text{ for all $t>0$},
$$
so that $\widetilde{H}_\Omega=H_\Omega$.
\end{proof}

\subsection{Proof of Theorem \ref{main2} a)}

We will need the following covariant-Feynman-Kac formula:

\begin{Proposition}\label{feyn1} For every $\Omega\subset M$ open, $t\geq 0$, $f\in \Gamma_{L^2}(\Omega,\IEE)$, and $\mu$-a.e. $x\in M$ one has the covariant Feynman-Kac formula
\begin{align}\label{feyn2}
\mathrm{exp}\big(-t H_\Omega(\nabla,V)\big)f(x) =\int_{\{t<\alpha_\Omega(x) \}}   \mathscr{A}^{\nabla}_V(x,t)\pa^{\nabla}_t(x)^{-1}f(\IX_t(x))  \Id \mathbb{P}.  
\end{align}
\end{Proposition}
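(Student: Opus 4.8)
The plan is to obtain the Dirichlet-semigroup Feynman–Kac formula \eqref{feyn2} as a limiting case of the already-established formula \eqref{feyn3} for the auxiliary operator $\widetilde{H}_\Omega(\nabla,V)$, by exhausting $\Omega$ from the inside with relatively compact smooth subdomains. The key observation is that for a relatively compact $\Omega'$ with smooth boundary the first exit time $\alpha_{\Omega'}$ and the first penetration time $\beta_{\Omega'}$ agree $\mathbb{P}^x$-almost surely (smoothly bounded domains are Kac regular, which is exactly Theorem \ref{main2} b), or can be taken as classically known for smooth relatively compact domains). Hence on such a nice $\Omega'$ we have $H_{\Omega'}(\nabla,V)=\widetilde{H}_{\Omega'}(\nabla,V)$, so \eqref{feyn3} applied to $\Omega'$ immediately reads
\begin{align*}
\mathrm{exp}\big(-t H_{\Omega'}(\nabla,V)\big)f(x)=\int_{\{t<\min(\beta_{\Omega'}(x),\alpha_M(x))\}}\mathscr{A}^{\nabla}_V(x,t)\pa^{\nabla}_t(x)^{-1}f(\IX_t(x))\,\Id\mathbb{P}=\int_{\{t<\alpha_{\Omega'}(x)\}}\mathscr{A}^{\nabla}_V(x,t)\pa^{\nabla}_t(x)^{-1}f(\IX_t(x))\,\Id\mathbb{P},
\end{align*}
which is \eqref{feyn2} for $\Omega'$. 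So the content of the proposition is really the passage from smooth relatively compact $\Omega'$ to an arbitrary open $\Omega$.

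First I would fix $x\in M$, $t\geq 0$, and $f\in\Gamma_{L^2}(\Omega,\IEE)$, and choose an exhaustion $\Omega_n\subset\Omega_{n+1}\subset\Omega$ by relatively compact open sets with smooth boundary and $\bigcup_n\Omega_n=\Omega$. By the observation above, \eqref{feyn2} holds with $\Omega$ replaced by each $\Omega_n$, where one must read $f|_{\Omega_n}$ on the analytic side (note $f\in\Gamma_{L^2}(\Omega,\IEE)$ need not vanish outside $\Omega_n$, so one applies the $\Omega_n$-formula to $P_{\Omega_n}f$, and the probabilistic integrand only sees $f(\IX_t)$ on $\{t<\alpha_{\Omega_n}\}$ where $\IX_t\in\Omega_n$ anyway). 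The analytic side converges: by monotone convergence of the associated quadratic forms, $\mathrm{exp}(-tH_{\Omega_n}(\nabla,V))P_{\Omega_n}\to\mathrm{exp}(-tH_{\Omega}(\nabla,V))$ strongly in $\Gamma_{L^2}(M,\IEE)$, so along a subsequence we have pointwise $\mu$-a.e. convergence of the left-hand sides. On the probabilistic side, Lemma \ref{dnaarrr} a) gives $\alpha_{\Omega_n}(x)\nearrow\alpha_\Omega(x)$ $\mathbb{P}$-a.s., hence $1_{\{t<\alpha_{\Omega_n}(x)\}}\nearrow 1_{\{t<\alpha_\Omega(x)\}}$ $\mathbb{P}$-a.s.; combined with the integrability bound \eqref{domii} (which dominates the full integrand uniformly in $n$) dominated convergence yields convergence of the right-hand sides to the right-hand side of \eqref{feyn2}. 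Matching limits completes the proof.

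The main obstacle I anticipate is the bookkeeping at the semibounded-from-below level: $V$ is only Kato decomposable, so the individual semigroups $\mathrm{exp}(-tH_{\Omega_n}(\nabla,V))$ need not be positivity preserving or uniformly bounded in an obvious way, and the monotone-form convergence $H_{\Omega_n}(\nabla,V)\to H_\Omega(\nabla,V)$ must be justified carefully (the forms are increasing as the domains $\Gamma_{C^\infty_c}(\Omega_n,\IEE)$ shrink, so the monotone convergence theorem for forms applies, but one should confirm the common semibound \eqref{swq} and that the limiting form is exactly the one defining $H_\Omega(\nabla,V)$). A secondary technical point is securing the domination \eqref{domii} with $W=V$ itself rather than $W=V+n1_{M\setminus\Omega}$; here one uses the $L^1_{\loc}$-control of $V$ along Brownian paths established just before Proposition \ref{aposssy}, so that $\mathscr{A}^{\nabla}_V(x,t)$ and the parallel transport furnish an integrable dominating function, legitimizing the limit passage uniformly in $n$.
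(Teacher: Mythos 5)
Your reduction of the general case to relatively compact smooth subdomains is exactly the paper's first step: exhaust $\Omega$ by smooth $\Omega_n$, use Lemma \ref{dnaarrr} a) and dominated convergence on the probabilistic side, and the decreasing-form convergence theorem (Theorem \ref{monn2} with Remark \ref{aposv}) on the analytic side. (Minor quibble: as $n$ grows the form domains \emph{grow} and the forms \emph{decrease}, so it is the decreasing, not the increasing, monotone convergence theorem that applies; the paper is careful about this.) The problem is your base case, and there the argument is circular within the paper's logical architecture.

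Concretely: from the Kac regularity of a smooth relatively compact $\Omega'$ you infer ``hence $H_{\Omega'}(\nabla,V)=\widetilde{H}_{\Omega'}(\nabla,V)$'' and then read off \eqref{feyn2} from \eqref{feyn3}. But the implication ``$\Omega'$ Kac regular $\Rightarrow H_{\Omega'}(\nabla,V)=\widetilde{H}_{\Omega'}(\nabla,V)$ for every Kato--Schr\"odinger bundle'' is precisely Theorem \ref{main2} a) (equivalently, the covariant identity \eqref{poa}), and the paper proves \emph{that} by comparing the two Feynman--Kac formulae of Proposition \ref{aposssy} and Proposition \ref{feyn1} --- i.e.\ it uses the very proposition you are trying to prove. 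What Kac regularity gives you for free (via Proposition \ref{scal1} and Proposition \ref{bkjs}) is only the \emph{scalar} identification $W^{1,2}_0(\Omega')=\widetilde{W}^{1,2}_0(\Omega')$; upgrading this to the covariant form domains $\Gamma_{W^{1,2}_0}(\Omega',\IEE;\nabla,V)=\Gamma_{\widetilde{W}^{1,2}_0}(\Omega',\IEE;\nabla,V)$ with a metric connection and a merely Kato decomposable $V$ is the hard content of the paper, and no independent argument for it is supplied in your proposal. The paper instead establishes the base case analytically: for smooth bounded $V$ on a smooth relatively compact domain, \eqref{feyn2} follows from It\^o's formula together with parabolic regularity up to the boundary (after \cite{driver}), and the general Kato decomposable $V$ is then reached by a chain of approximations in the potential (Friedrichs mollification for bounded $V$, fiberwise truncation $V_n=\min(n,V)$ for $V$ bounded below), each limit being controlled as in Theorem 2.11 of \cite{guneysu}. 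Unless you replace your ``hence'' by a direct covariant analogue of Proposition \ref{bkjs} (which is not in the paper and is not obviously obtainable by the same cutoff argument), the proposal does not close.
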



In case $\Omega$ is relatively compact with smooth boundary and $V$ is smooth, formula (\ref{feyn2}) is implicitly included in \cite{driver}. For general $\Omega$'s and $V$'s (\ref{feyn2}) seems to be new. 

\begin{proof}[Proof of Proposition \ref{feyn1}] It is enough to prove the formula in case $\Omega$ is relatively compact with a smooth boundary. Indeed, in the general case we can then pick a family of open subsets $\{\Omega_n:n\in\IN\}$ of $\Omega$ having a smooth boundary, such that $\Omega_n\subset \Omega_{n+1}$ for all $n\in\IN$ and $\bigcup_{n\in\IN}\Omega_n=\Omega$. Then we have $\alpha_{\Omega_n}(x)\nearrow\alpha_\Omega(x) $ and 
\begin{align}\label{feyn4}
\int_{\{t<\alpha_{\Omega_n}(x) \}} \mathscr{A}^{\nabla}_V(x,t)\pa^{\nabla}_t(x)^{-1}f(\IX_t(x))  \Id \mathbb{P}\to\int_{\{t<\alpha_{\Omega}(x) \}}  \mathscr{A}^{\nabla}_V(x,t)\pa^{\nabla}_t(x)^{-1}f(\IX_t(x))  \Id \mathbb{P}.  
\end{align}
follows from dominated convergence. Likewise,
\begin{align}\label{feyn5}
\mathrm{exp}\big(-t H_{\Omega_n}(\nabla,V)\big)f(x) \to \mathrm{exp}\big(-t H_\Omega(\nabla,V)\big)f(x)  
\end{align}
follows from applying Theorem \ref{monn2} and Remark \ref{aposv} thereafter: indeed, define the form $Q$ by 
$$
\dom(Q):=\Gamma_{W^{1,2}_0}(\Omega,E;\nabla,V),\quad Q(f_1,f_2)=\left\langle f_1,f_2\right\rangle_{\nabla,V,*},\quad f_1,f_2\in \dom (Q).
$$
Then the form $Q'$ from Theorem \ref{monn2} is closable (it has the closed extension $Q$), so that it remains to prove $\overline{Q'}=Q$, which is easily checked from the definitions.\\
Thus we can and we will assume $\Omega$ is relatively compact and has a smooth boundary. In case $V$ is smooth and bounded, the asserted formula is a simple consequence of Ito\rq{}s formula and parabolic regularity up to the boundary (cf. p. 102 in \cite{driver}). In case $V$ is bounded, we can use Friedrichs mollifiers to pick a sequence of smooth potentials $V_n:M\to \mathrm{End}(E)$ with $|V_n|\leq |V|$ and $|V_n-V|\to 0$ $\mu$-a.e. in $M$. Applying (\ref{feyn2}) with $V$ replaced with $V_n$ and taking $n\to \infty$ gives the result in this case. Finally, if $V$ is bounded from below, we can use the spectral calculus on the fibers of $E\to M$ to define a sequence of potentials 
$$
V_n:M\longrightarrow \mathrm{End}(E), \quad V_n(x):=\min(n,V(x))
$$
then apply (\ref{feyn2}) with $V$ replaced with $V_n$ and take $n\to \infty$. Each approximation argument can be justified precisely as in the proof of Theorem 2.11 in \cite{guneysu}, which treats the case $M=\Omega$: In each situation one uses convergence results for sesqulinear forms to control the left-hand site of (\ref{feyn2}) and convergence theorems for integrals to control the right-hand-side.
\end{proof}

\begin{proof}[Proof of Theorem \ref{main2} a)] There is nothing left to prove: We know from Proposition \ref{zbl} that 
$$
\lim_{n\to\infty}\mathrm{exp}\big(-t H_M(\nabla,V+n1_{M\setminus \Omega})\big)=  \mathrm{exp}\big(-t \widetilde{H}_{\Omega}(\nabla,V)\big)P_{\Omega}
$$
strongly (without any further assumptions on $\Omega$), and
$$
\widetilde{H}_{\Omega}(\nabla,V)=H_{\Omega}(\nabla,V)
$$
follows from a comparison of the covariant Feynman-Kac formulae for $\exp(-t\widetilde{H_\Omega}(\nabla,V))$ and $\exp(-tH_\Omega(\nabla,V))$ (cf. Proposition \ref{aposssy} and Proposition \ref{feyn1}), using that $\Omega$ is Kac regular.  
\end{proof}

\subsection{Proof of Theorem \ref{main2} b)}

We recall that given an open subset $\Omega\subset M$, the space $W^{1,2}(\Omega;d) $ is defined to be the space of all $f\in L^2(\Omega)$ such that $df\in \Gamma_{L^2}(\Omega;T^*M)$ in the sense of distributions. We are going to need a special case of the following result for the proof of Theorem \ref{main2} b):

\begin{Proposition}\label{bkjs}
\label{identification}
Let $\Omega\subset M$ be a relatively compact open subset with locally Lipschitz boundary. Then one has 
\begin{align*}
W^{1,2}_0(\Omega  ) = \big\{ f\in W^{1,2}(M ): f|_{M\setminus \Omega}=0 \>\>\text{\rm $\mu$-a.e.} \big\},
\end{align*}
in particular, $\Omega$ is Kac-regular.
\end{Proposition}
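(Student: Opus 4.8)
The plan is to prove the nontrivial inclusion
$$
\{f\in W^{1,2}(M): f|_{M\setminus\Omega}=0\ \mu\text{-a.e.}\}\subseteq W^{1,2}_0(\Omega),
$$
the reverse inclusion being immediate, since any $g\in W^{1,2}_0(\Omega)$ is a $W^{1,2}$-limit of $\Gamma_{C^\infty_c}(\Omega)$-functions whose extensions by zero lie in $W^{1,2}(M)$ and vanish outside $\Omega$. So I fix $f\in W^{1,2}(M)$ with $f=0$ $\mu$-a.e.\ on $M\setminus\Omega$ and must produce a sequence in $\Gamma_{C^\infty_c}(\Omega)$ converging to $f|_\Omega$ in the $W^{1,2}$-norm. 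Since $\overline{\Omega}$ is compact and $\partial\Omega$ is Lipschitz, I would cover $\overline{\Omega}$ by finitely many charts: interior charts with closure contained in $\Omega$, and boundary charts $U_i$ in which, after a bi-Lipschitz change of coordinates with the pulled-back Riemannian metric uniformly comparable to the Euclidean one, $\Omega\cap U_i$ becomes the region above a Lipschitz graph $\{x_m>\gamma_i(x')\}$. Choosing a smooth partition of unity $\{\chi_i\}$ subordinate to this cover and writing $f=\sum_i\chi_i f$, it suffices to approximate each piece $\chi_i f$, since multiplication by $\chi_i\in C^\infty_c$ preserves $W^{1,2}(M)$ and the a.e.\ vanishing on $M\setminus\Omega$.

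For the interior pieces, Friedrichs mollification already yields $\Gamma_{C^\infty_c}(\Omega)$-approximants. The heart of the matter is a boundary piece $u:=\chi_i f$, transported to Euclidean coordinates, which lies in $W^{1,2}(\IR^m)$, is compactly supported, and vanishes a.e.\ on $\{x_m\le\gamma_i(x')\}$. Here I would use the translation-plus-mollification device: for $\tau>0$ set $u_\tau(x',x_m):=u(x',x_m-\tau)$, which vanishes a.e.\ on $\{x_m\le\gamma_i(x')+\tau\}$. Because $\gamma_i$ is $L$-Lipschitz, the hypersurface $\{x_m=\gamma_i(x')+\tau\}$ sits at Euclidean distance at least $\tau/\sqrt{1+L^2}$ from $\{x_m=\gamma_i(x')\}$, so $\operatorname{supp}u_\tau$ lies at positive distance from $\partial\Omega$, strictly inside $\Omega$. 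Continuity of translation in $L^2$ gives $u_\tau\to u$ in $W^{1,2}$ as $\tau\to0^+$, and mollifying $u_\tau$ at scale $\varepsilon\ll\tau$ produces $\Gamma_{C^\infty_c}(\Omega)$-functions converging to $u_\tau$; a diagonal argument then yields $\Gamma_{C^\infty_c}(\Omega)$-approximants of $u$.

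Gluing the pieces and summing over $i$ shows $f|_\Omega\in W^{1,2}_0(\Omega)$, which establishes the claimed equality. For the final assertion I would feed this into Proposition \ref{scal1}: since $W^{1,2}_0(M)\subseteq W^{1,2}(M)$, the equality just proven gives $\{f\in W^{1,2}_0(M): f|_{M\setminus\Omega}=0\}\subseteq W^{1,2}_0(\Omega)$, while the reverse inclusion always holds, so
$$
W^{1,2}_0(\Omega)=\{f\in W^{1,2}_0(M): f|_{M\setminus\Omega}=0\ \mu\text{-a.e.}\},
$$
which is exactly (\ref{alkdd}); hence $\Omega$ is Kac regular.

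I expect the main obstacle to be the passage from the Euclidean translation argument to the manifold, namely arranging boundary charts in which $\partial\Omega$ is a genuine Lipschitz graph and in which the transverse translation $x_m\mapsto x_m-\tau$ is well-defined and actually moves the support into $\Omega$ by a \emph{positive Riemannian} distance. This forces the coordinate change to be bi-Lipschitz with the pulled-back metric uniformly comparable to the Euclidean metric on the relatively compact chart, so that the Euclidean gap $\tau/\sqrt{1+L^2}$ becomes a positive Riemannian separation and the $W^{1,2}$-norms computed with the two metrics are equivalent. The remaining ingredients — continuity of translation, mollifier convergence, and the partition-of-unity gluing — are then routine.
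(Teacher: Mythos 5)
Your argument is correct and follows the same architecture as the paper's proof: cover $\overline{\Omega}$ by finitely many interior and boundary charts, take a subordinate partition of unity, handle interior pieces by mollification/Meyers--Serrin, and reduce each boundary piece to a compactly supported $W^{1,2}$-function on $\mathbb{R}^m$ vanishing a.e.\ outside a bounded Lipschitz set, using that the chart is a diffeomorphism on a relatively compact set so the pulled-back metric is quasi-isometric to the Euclidean one. The only point of divergence is the Euclidean local model: the paper disposes of it by citation (Remark \ref{ende}.2, i.e.\ the Herbst--Zhao result that Euclidean domains with the segment property satisfy the identification, applied to the bounded Lipschitz set $B_i$), whereas you prove it directly by the classical translation-plus-mollification device, using the $\tau/\sqrt{1+L^2}$ gap created by pushing the support off the Lipschitz graph. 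Your route is self-contained and slightly more elementary at that step; the paper's is shorter but imports an external Euclidean theorem. Your deduction of Kac regularity from the equality via Proposition \ref{scal1} (using $W^{1,2}_0(M)\subseteq W^{1,2}(M)$ to pass to (\ref{alkdd})) matches the intended reading of the paper's ``in particular.''
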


\begin{proof} Set $m:=\dim M$. It remains to prove 
$$
W^{1,2}_0(\Omega)\supset \{f\in W^{1,2}(M): f|_{M\setminus \Omega}= 0\>\text{$\mu$-a.e.}\}.
$$
According to Def. \ref{locallylip} this means that  for every $p\in \overline{\Omega}\setminus \Omega=\partial\Omega $ there exists a pair of open neighborhoods $V$, $X$ of $p$, and a  smooth diffeomorphism $\alpha: X\rightarrow A\subset \mathbb{R}^m$ such that: 
\begin{enumerate}
\item $\overline{V}\subset X$,
\item $\overline{V}$ is compact,
\item $\alpha(V\cap \Omega)$ is a bounded Lipschitz set of $\mathbb{R}^m$. 
\end{enumerate}
Let us consider a finite collection of open subsets of $M$, $\mathcal{W}:=\{V_1,...,V_q,W_1,...,W_n\}$, such that
\begin{itemize}
\item for each $j\in \{1,...,n\}$ we have $\overline{W_j}\subset \Omega$,
\item $\overline{\Omega}\subset V_1\cup...\cup V_q\cup W_1\cup...\cup W_n$,
\item for each $i\in \{1,...,q\}$ there exists another open subset $X_i$ such that the pair given by $V_i$ and $X_i$ satisfies  the conditions $(1)$--$(3)$ required in the statement.
\end{itemize}
Clearly such a finite collection of open subsets exists. Let $I$ be an open subset of $M$ such that $I\cap \Omega=\emptyset$ and 
$$
M=V_1\cup...\cup V_q\cup W_1\cup...\cup W_n\cup I.
$$
 Let 
$$
\mathcal{W'}:=\{V_1,...,V_q,W_1,...,W_n,I\}
$$
 and let 
$$
\{\phi_1,...,\phi_q,\psi_1,...,\psi_n,\tau\}
$$
be a partition of unity subordinated to $\mathcal{W}$. Given now $f\in W^{1,2}(M)$ with $f|_{M\setminus \Omega}= 0$ $\mu$-a.e., for each $i\in \{1,...,n\}$  we have $\supp(\psi_if)\subset W_i$. Therefore there exists a sequence $\{\beta^i_p\}_{p\in \mathbb{N}}\subset C^{\infty}_c(W_i)$ such that $\beta^i_p\rightarrow \psi_if$ in $W^{1,2}_0(W_i)$ as $p\rightarrow \infty$. Indeed, by a Meyers-Serrin type theorem \cite{guidetti}, we can pick a sequence $\{\overline{\beta}^i_p\}_{p\in \mathbb{N}}$  in $C^{\infty}(W_i)\cap W^{1,2}(W_i)$ such that $\overline{\beta}^i_p\rightarrow f$ in $W^{1,2}(W_i)$. Then, by defining $\beta^i_p:= \psi_i\overline{\beta}^i_p$, it is clear that $\{\beta^i_p\}_{p\in \mathbb{N}}\subset C^{\infty}_c(W_i)$ and  that $\beta^i_p\rightarrow \psi_if$ in $W^{1,2}_0(W_i)$ as $p\rightarrow \infty$. Let us now consider the other case. For each $i\in \{1,...,q\}$ let  $Y_i:=V_i\cap \Omega$. Then we have $\supp(\phi_if)\subset \overline{Y_i}\cap V_i$. Let $A_i:=\alpha_i(X_i)$, $E_i:=\alpha_i(V_i)$ and $B_i:=\alpha_i(Y_i)$.  Since  we assumed that $\overline{V_i}\subset X_i$ we have that  $(\alpha_i^*g_e)|_{V_i}$ is quasi-isometric to $g|_{V_i}$ where $g_e$ is the standard Euclidean metric on $\mathbb{R}^m$ and g the metric on $M$. In this way we can conclude that $(\phi_if)\circ (\alpha_i|_{V_i})^{-1}\in W_{0,e}^{1,2}(E_i)$ and that  $(\phi_if)\circ (\alpha_i|_{Y_i})^{-1}\in W^{1,2}_e(B_i)$, where of course $W^{1,2}_e$ stands for the various Sobolev spaces that are defined with respect to $g_e$. Furthermore we know  that $(\phi_if)\circ (\alpha_i|_{Y_i})^{-1}|_{A_i\setminus B_i}\equiv 0$ because $(\phi_if)|_{X_i\setminus Y_i}\equiv0$. Therefore, by extending $(\phi_if)\circ (\alpha_i|_{Y_i})^{-1}$ outside its support as the identically zero function, we can say, with a little abuse of notation, that $(\phi_if)\circ (\alpha_i|_{Y_i})^{-1}\in W^{1,2}_e(\mathbb{R}^m)$ and that $(\phi_if)\circ (\alpha_i|_{Y_i})^{-1}|_{\mathbb{R}^m\setminus B_i}\equiv 0$. As $B_i$ is a bounded Lipschitz open subset in $\mathbb{R}^m$ there exists a sequence $\{\upsilon^i_p\}_{p\in \mathbb{N}}\subset C_c^{\infty}(B_i)$ such that $\upsilon^i_p \rightarrow (\phi_if)\circ (\alpha_i|_{Y_i})^{-1}$ in $W^{1,2}_{0,e}(B_i)$ as $p\rightarrow \infty$. This latter property follows by the existence of the trace operator, see e.g. \cite{adams} Th. 5.37 or \cite{evans} Th. 4.6.  Finally, using the fact that $(\alpha_i^*g_e)|_{V_i}$ is quasi-isometric to $g|_{V_i}$, we can conclude that $\gamma^i_p\rightarrow \phi_if$ in $W^{1,2}_0(Y_i)$ as $p\rightarrow \infty$ where $\gamma^i_p:=\upsilon^i_p\circ (\alpha_i|_{Y_i})$ and clearly $\{\gamma^i_p\}_{p\in \mathbb{N}}\subset C^{\infty}_c(Y_i)$ by construction. Let us define now the following sequence of functions $\eta_p:=\gamma^1_p+\cdots+\gamma_p^q+\beta_p^1+\cdots+\beta_p^n$. It is clear by construction that $\{\eta_p\}_{p\in \mathbb{N}}\subset C^{\infty}_c(\Omega)$. Moreover we have
\begin{align} 
& \nonumber \|\eta_p-f\| =\|\gamma^1_p+\cdots+\gamma_p^q+\beta_j^1+\cdots+\beta_p^n-\sum_{i=1}^q\phi_if-\sum_{i=1}^n\psi_if\| \\
& \nonumber   \leq \|\gamma^1_p-\phi_1f\| +\cdots+\|\gamma^q_p-\phi_qf\| +\|\beta^1_p-\psi_1f\| +\cdots+\|\beta^n_p-\psi_nf\| 
\end{align}
and by construction all the terms in the second line tend to zero as $p\rightarrow \infty$. Hence we have shown that $\eta_p\rightarrow f$ in $\Gamma_{L^2}(\Omega,T^*_{\IC}M)$ as $p\rightarrow \infty$. Similarly we have 
\begin{align} 
& \nonumber \|d\eta_p-df\| =\|d\gamma^1_p+\cdots+d\gamma_p^q+d\beta_p^1+\cdots+d\beta_p^n-d(\sum_{i=1}^q\phi_if-\sum_{i=1}^n\psi_if)\| \\
& \nonumber  \leq \|d\gamma^1_p-d(\phi_1f)\| +\cdots+\|d\gamma^q_p-d(\phi_qf)\| +\|d\beta^1_p-d(\psi_1f)\| +\cdots\\
& \nonumber+\|d\beta^n_p-d(\psi_nf)\| 
\end{align}
and again by construction we know that all the terms on the right hand side of the inequality  tend to zero as $p\rightarrow \infty$. This tells us that $d\eta_p\rightarrow df$ in $\Gamma_{L^2}(\Omega,T^*_{\IC}M)$. In conclusion we have shown that $\eta_p\rightarrow f$ in $W^{1,2}(\Omega)$ and thereby we can conclude that $f\in W^{1,2}_0(\Omega)$ as desired.

\end{proof}

\begin{proof}[Proof of Theorem \ref{main2} b)]

Fix $x\in \Omega$. Pick a sequence $\Omega_n\subset \Omega$, $n\in\IN$ of relatively compact open subsets of $M$ with locally Lipschitz boundary, such that $x\in \Omega_1$, $\Omega_n\subset \Omega_{n+1}$ for all $n\in\IN$, $\bigcup_{n\in\IN} \Omega_n=\Omega$ and such that for all $n\in\IN$ there exists an open subset $\Upsilon_n\subset M$ such that $\Upsilon_n\cap \Omega=\Omega_n$ for all $n\in\IN$ and $\bigcup_{n\in\IN} (\Upsilon_n\cap \overline{\Omega})= \overline{\Omega}$. Then by Proposition \ref{bkjs} we have
$$
\mathbb{P}^x \{ \alpha_{\Omega_n} =\min(\beta_{\Omega_n},\alpha_M) \}=1\quad\text{for all $n$},
$$
so that using $\alpha_{\Omega_n}\to \alpha_{\Omega}$ and $\beta_{\Omega_n}\to \beta_{\Omega}$ $\mathbb{P}^x$-a.s. as $n\to \infty$, a consequence of Lemma \ref{dnaarrr}, we arrive at
\begin{align*}
\mathbb{P}^x \{ \alpha_{\Omega} \ne \min(\beta_{\Omega },\alpha_M) \}\leq \mathbb{P}^x\bigcup_{n\in\IN}\{ \alpha_{\Omega_n} \ne  \min(\beta_{\Omega_n},\alpha_M) \}\leq \sum_{n\in\IN} \mathbb{P}^x \{ \alpha_{\Omega_n} \ne   \min(\beta_{\Omega_n},\alpha_M)  \}=0,
\end{align*}
completing the proof.
\end{proof}

\section{Appendix: Some functional analytic facts}

In this section we collect some facts about the monotone convergence of nonnegative closed sesquilinear forms which are possibly not densely defined.

Let $\IHH$ be a complex separable Hilbert space   and let $\mathcal{L}(\IHH)$ be the space of bounded linear operators. Assume that $Q$ is a semibounded from below closed sesquilinear form on $\IHH$. Then $Q$ is a densely defined semibounded from below closed sesquilinear form on the Hilbert subspace $\IHH_{Q}:= \overline{\dom(Q)}^{\left\|\cdot\right\|_{\IHH}}\subset \IHH$ and thus there exists a unique semibounded from below self-adjoint operator $S_{Q}$ on $\IHH_{Q}$ such that
$$
\dom(\sqrt{S_{Q}})=\dom(Q),\quad Q(f_1,f_2)=\left\langle \sqrt{S_{Q}}f_1,\sqrt{S_{Q}}f_2\right\rangle_{\IHH}.
$$
Let then $P_{Q}:\IHH\to \IHH_{Q}$ denote the orthogonal projection. Recall also that for semibounded forms $Q_1$ and $Q_2$ in $\IHH$ one by sink investigation has $Q_1\leq Q_2$ if and only if $\dom(Q_1)\supset \dom(Q_2)$ and $Q_1(f,f)\leq Q_2(f,f)$ for all $f\in \dom (Q_2)$. Given a semibounded sesquilinear form $Q$ on $\IHH$ there exists a largest (with respect to '$\leq$') closable semibounded sesquilinear form $\mathrm{reg}(Q)$ on $\IHH$ (the so called \emph{regular part of $Q$}), such that $\mathrm{reg}(Q)\leq Q$.\\
The following results follow from Theorem 4.2 in \cite{simon}:

\begin{Theorem}\label{monn} Let $ Q_1\leq Q_2\leq\dots$ be a sequence of closed semibounded from below sesquilinear forms on $\IHH$.
Then 
$$
Q_{\infty}(f_1,f_2):=\lim_{n\to\infty} Q_n(f_1,f_2)
$$
with 
$$
\dom(Q_{\infty}):=\Big\{ f\in\bigcap_{n\in\IN}\dom(Q_n):\sup_{n\in\IN} Q_n(f,f)<\infty\Big\}\ and\ f_1,f_2\in \dom(Q_{\infty})
$$
is a closed semibounded from below sesquilinear form $Q_{\infty}$ in $\IHH$, and one has
$$
\mathrm{e}^{-t S_{Q_n}}P_{Q_n} \to \mathrm{e}^{ -t S_{Q_{\infty}}}P_{Q_{\infty}} \>\>\text{ strongly in $\ILL(\IHH)$ as $n\to\infty$, for all $t\geq 0$.}
$$ 
\end{Theorem}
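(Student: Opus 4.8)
The plan is to deduce Theorem \ref{monn} from Simon's monotone convergence theorem for quadratic forms (Theorem 4.2 in \cite{simon}), which is formulated precisely for monotone sequences of nonnegative, \emph{possibly not densely defined}, closed forms, regarded as lower semicontinuous functionals $\IHH\to[0,\infty]$. The first step is to reduce to nonnegative forms. Since $Q_1\leq Q_n$ for all $n$, any lower bound $-C$ of $Q_1$ (that is, $Q_1(f,f)\geq -C\|f\|^2$ for $f\in\dom(Q_1)$) is automatically a common lower bound for every $Q_n$: for $f\in\dom(Q_n)\subset\dom(Q_1)$ one has $Q_n(f,f)\geq Q_1(f,f)\geq -C\|f\|^2$. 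Replacing each $Q_n$ by $Q_n+C\langle\cdot,\cdot\rangle$ leaves the domains unchanged, replaces $S_{Q_n}$ by $S_{Q_n}+C$, and hence multiplies the semigroup by the scalar $\e^{-tC}$ (and does the analogous thing to $Q_\infty$); so I may assume $Q_n\geq 0$ for all $n$.

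Next I would encode the forms in Simon's language. For each $n$ I set $q_n:\IHH\to[0,\infty]$ by $q_n(f):=Q_n(f,f)$ if $f\in\dom(Q_n)$ and $q_n(f):=+\infty$ otherwise. Closedness of $Q_n$ is equivalent to lower semicontinuity of $q_n$, and the nesting $\dom(Q_1)\supset\dom(Q_2)\supset\cdots$ together with $Q_n\leq Q_{n+1}$ yields the pointwise monotonicity $q_1\leq q_2\leq\cdots$ on all of $\IHH$ (off $\dom(Q_{n+1})$ the value $q_{n+1}$ is $+\infty$, so the inequality is trivial there). The pointwise supremum $q_\infty:=\sup_n q_n=\lim_n q_n$ then satisfies $\{q_\infty<\infty\}=\{f\in\bigcap_n\dom(Q_n):\sup_n Q_n(f,f)<\infty\}=\dom(Q_\infty)$, and on this set $q_\infty(f)=\lim_n Q_n(f,f)=Q_\infty(f,f)$; polarization together with the monotone convergence of the diagonals recovers the full sesquilinear form $Q_\infty$.

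With this dictionary in place, Simon's Theorem 4.2 applies directly: it asserts that $q_\infty$ is again a closed (lower semicontinuous) nonnegative form and that the associated semigroups converge strongly. Here I would recall how a nonnegative, not necessarily densely defined, closed form $Q$ produces an operator: $Q$ is densely defined on $\IHH_Q=\overline{\dom(Q)}$, giving $S_Q$ on $\IHH_Q$, and the corresponding contraction semigroup on $\IHH$ is exactly $\e^{-tS_Q}P_Q$, understood as extended by $0$ on $\IHH_Q^{\perp}$. Under this identification Simon's conclusion reads $\e^{-tS_{Q_n}}P_{Q_n}\to\e^{-tS_{Q_\infty}}P_{Q_\infty}$ strongly in $\ILL(\IHH)$ for every $t\geq 0$, and multiplying through by the scalar $\e^{tC}$ undoes the initial shift and yields the theorem.

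The step that genuinely requires the non-densely-defined version of Simon's theorem — and the main conceptual obstacle — is the behaviour of the projections. The subspaces $\IHH_{Q_n}=\overline{\dom(Q_n)}$ decrease, so the $P_{Q_n}$ converge strongly only to the projection onto $\bigcap_n\IHH_{Q_n}$, which in general strictly contains $\IHH_{Q_\infty}=\overline{\dom(Q_\infty)}$. Consequently one cannot pass to the limit separately in the factor $\e^{-tS_{Q_n}}$ and in the projection $P_{Q_n}$; it is essential that $\e^{-tS_{Q_n}}P_{Q_n}$ be handled as a single object, which is precisely what Simon's form–monotone–convergence theorem delivers (the growth of $Q_n$ along the extra directions is exactly what suppresses them in the semigroup). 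Everything else is bookkeeping: verifying the closedness–lower-semicontinuity correspondence and the identification of $\dom(Q_\infty)$.
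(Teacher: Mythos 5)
Your proposal is correct and follows essentially the same route as the paper, which simply derives Theorem \ref{monn} from Theorem 4.2 in \cite{simon}; you have merely made explicit the routine reductions (shifting by a common lower bound of $Q_1$, the dictionary between closed forms and lower semicontinuous functionals with value $+\infty$ off the domain, and the identification $\e^{-tS_Q}P_Q$ for non-densely-defined forms) that the paper leaves to the reader.
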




\begin{Theorem}\label{monn2} Let $Q_1\geq Q_2\geq\dots$ be a sequence of closed semibounded sesquilinear forms on  $\IHH$. Define a nonnegative sesquilinear form in $\IHH$ given by
$$
Q'(f_1,f_2):=\lim_{n\to\infty} Q_n(f_1,f_2),\quad\dom(Q'):=\bigcup_{n\in\IN} \dom(Q_n), \quad f_1,f_2\in \dom(Q')
$$
and let $Q_{\infty}$ denote the form on $\IHH$ given by the closure of $\mathrm{reg}(Q\rq{})$ in $\IHH$. Then one has
$$
\mathrm{e}^{-t S_{Q_n}}P_{Q_n}\to \mathrm{e}^{-t S_{Q_{\infty}}}P_{Q_{\infty}}\text{ strongly in $\ILL(\IHH)$ as $n\to\infty$, for all $t\geq 0$ .}
$$ 
\end{Theorem}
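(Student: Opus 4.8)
The plan is to obtain the statement as a direct application of Simon's monotone convergence theorem for decreasing forms (Theorem 4.2 in \cite{simon}), the only genuinely new ingredient being the bookkeeping caused by the fact that the $Q_n$ need not be densely defined. First I would record the elementary structural consequences of the hypothesis: since $Q_1\geq Q_2\geq\cdots$, the convention recalled above gives $\dom(Q_1)\subset\dom(Q_2)\subset\cdots$, so that $\dom(Q')=\bigcup_{n\in\IN}\dom(Q_n)$ is an increasing union and, for every $f$ in it, $Q_n(f,f)$ is eventually defined and nonincreasing in $n$; hence $Q'(f,f)=\inf_{n}Q_n(f,f)$. The assumed nonnegativity of $Q'$ then forces each individual $Q_n$ to be nonnegative on its domain, because $Q_n(f,f)\geq\inf_m Q_m(f,f)=Q'(f,f)\geq 0$ for $f\in\dom(Q_n)$. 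Thus without loss of generality all forms in sight are nonnegative.

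Next I would translate everything into Simon's formalism of lower semicontinuous quadratic functionals $q:\IHH\to[0,\infty]$. A nonnegative closed (possibly non-densely-defined) form $Q$ corresponds to the functional equal to $Q(f,f)$ on $\dom(Q)$ and to $+\infty$ elsewhere, its lower semicontinuity being equivalent to closedness; the self-adjoint operator Simon attaches to it acts on $\overline{\dom(Q)}=\IHH_Q$, and the semigroup he associates is precisely $\mathrm{e}^{-tS_Q}P_Q$ on all of $\IHH$. In this way the non-density of the domains is absorbed automatically into the projections $P_{Q_n}$ and $P_{Q_\infty}$, and no separate argument is needed for it. Under this dictionary the pointwise limit of the decreasing sequence $(q_n)$ is $\inf_n q_n$, whose effective domain is exactly $\bigcup_n\dom(Q_n)=\dom(Q')$ and which coincides with the form $Q'$ of the statement.

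Since $Q'$ need not itself be lower semicontinuous (i.e.\ closable), one passes to its regular part $\mathrm{reg}(Q')=\mathrm{ref}(Q')$, the largest closable form dominated by $Q'$, whose existence was recalled in this section; by definition $Q_\infty$ is its closure, a genuine closed nonnegative form with operator $S_{Q_\infty}$ on $\IHH_{Q_\infty}=\overline{\dom(Q_\infty)}$. Simon's Theorem 4.2 asserts exactly that, for a decreasing sequence of such functionals, the associated semigroups converge strongly to the semigroup of the closure of the regular part of the limit; reading this back through the dictionary gives $\mathrm{e}^{-tS_{Q_n}}P_{Q_n}\to \mathrm{e}^{-tS_{Q_\infty}}P_{Q_\infty}$ strongly in $\ILL(\IHH)$ for every $t\geq 0$, which is the claim.

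The hard part is not any single estimate but the verification that the reduction to subspaces is compatible with the regular-part and closure operations — that is, that ``self-adjoint operator on $\overline{\dom}$ together with the orthogonal projection onto $\overline{\dom}$'' is indeed the correct interpretation of the object produced by Simon's theorem, and that the regular part and its closure in $\IHH$ behave as stated. This is precisely where all the hypotheses, and the extended-real-valued viewpoint, are actually used; once it is in place the convergence is a verbatim citation of \cite{simon}.
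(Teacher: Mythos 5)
Your proposal is correct and follows essentially the same route as the paper, which likewise obtains Theorem \ref{monn2} directly from Theorem 4.2 in \cite{simon}; your added bookkeeping (the dictionary between non-densely-defined closed forms and $[0,\infty]$-valued lower semicontinuous functionals, with the projections $P_{Q_n}$, $P_{Q_\infty}$ absorbing the non-density, and the identification of $\mathrm{reg}(Q')$ with $\mathrm{ref}(Q')$) is exactly the implicit content of the paper's citation. No discrepancy to report.
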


\begin{Remark}\label{aposv} Note that in the above situation $Q_{\infty}$ is the closure of $Q\rq{}$ if $Q\rq{}$ is closable, and $Q_{\infty}=Q\rq{}$ if $Q\rq{}$ is even closed.
\end{Remark}


\begin{thebibliography}{99}

\bibitem{adams}
R{.} A{.} Adams, J{.} J{.} F{.} Fournier.
\newblock Sobolev Spaces. Second edition.
\newblock Pure and Applied Mathematics (Amsterdam), 140. Elsevier/Academic Press, Amsterdam, 2003

\bibitem{aubin}
T{.} Aubin.
\newblock Espaces de Sobolev sur les vari\'et\'es riemanniennes. 
\newblock Bull{.} Sci{.} Math{.} (2) 100 (1976), no{.} 2, 149--173.



\bibitem{driver} B{.} K{.} Driver, A{.} Thalmaier.
\newblock Heat equation derivative formulas for vector bundles.
\newblock J{.} Funct{.} Anal{.} 183 (2001), no{.} 1, 42--108. 


\bibitem{daco}
B{.} Dacorogna.
\newblock Introduction to the Calculus of Variations. 
\newblock Imperial College Press, London, 2004.

\bibitem{evans}
L{.} C{.} Evans, R{.} F{.} Gariepy. 
\newblock Measure theory and fine properties of functions. Revised edition. 
\newblock  CRC Press, Boca Raton, FL, 2015.

\bibitem{fuku}
 M{.} Fukushima, Y{.} Oshima,  M{.} Takeda.
\newblock Dirichlet forms and symmetric Markov processes. 
\newblock De Gruyter Studies in Mathematics, 19{.} Walter de Gruyter \& Co{.}, Berlin, 1994.





\bibitem{grio}
 A. Grigor'yan.
 \newblock Heat kernels on weighted manifolds and applications{.} The ubiquitous heat kernel.
 \newblock  Contemp{.} Math{.}, 398, 93--191, Amer{.} Math{.} Soc{.}, Providence, RI, 2006. 

\bibitem{gri}  A. Grigoryan.
 \newblock Heat kernel and analysis on manifolds.
\newblock AMS/IP Studies in Advanced Mathematics, 47{.} American Mathematical Society, Providence, RI; International Press, Boston, MA, 2009.




\bibitem{guneysu}
B{.} G\"uneysu. 
\newblock On generalized Schr\"odinger semigroups.
\newblock J{.} Funct{.} Anal{.} 262 (2012), no{.} 11, 4639--4674. 

\bibitem{guneysu2}
B{.} G\"uneysu. 
\newblock Nonrelativistic hydrogen type stability problems on nonparabolic $3$-manifolds. 
\newblock Ann{.} Henri Poincare 13 (2012), no{.} 7, 1557--1573.


\bibitem{gunbook}
B{.} G\"uneysu. 
\newblock Covariant Schr\"odinger semigroups on Riemannian manifolds. 
\newblock Operator Theory: Advances and Applications, 264{.} Birkh\"ouser/Springer, Cham, 2017{.} xviii+239 pp. 

\bibitem{hedberg}
I{.} L{.} Hedberg.
\newblock Spectral synthesis and stability in Sobolev spaces. 
\newblock Euclidean harmonic analysis (Proc. Sem{.}, Univ{.} Maryland, College Park, Md{.}, 1979), pp{.} 73--103, Lecture Notes in Math{.}, 779, Springer, Berlin, 1980.


\bibitem{herbst}
 I{.} W{.} Herbst, Z{.} X{.} Zhao.
\newblock  Sobolev spaces, Kac-regularity, and the Feynman-Kac formula. 
\newblock Seminar on Stochastic Processes, 1987 (Princeton, NJ, 1987), 171--191, Progr. Probab. Statist., 15, Birkh\"auser Boston, Boston, MA, 1988.


\bibitem{guidetti} 
D{.} Guidetti, B{.} G\"uneysu, D{.} Pallara.
\newblock $L^1$-elliptic regularity and $H=W$ on the whole $L^p$-scale on arbitrary manifolds.
\newblock Ann{.}  Acad{.}  Sci{.}  Fenn{.}  Math{.}  42 (2017), no{.}  1, 497--521.

\bibitem{GuPo}
V{.} Guillemin, D{.} Pollack.
\newblock Differential topology.
\newblock  Prentice-Hall, Inc., Englewood Cliffs, N.J., 1974.

\bibitem{MoHi}
M{.} W{.} Hirsch.
\newblock Differential topology. 
\newblock Corrected reprint of the 1976 original. Graduate Texts in Mathematics, 33. Springer-Verlag, New York, 1994


\bibitem{hsu} 
E{.} P{.} Hsu.
\newblock Stochastic analysis on manifolds.
\newblock Graduate Studies in Mathematics, 38{.} American Mathematical Society, Providence, RI, 2002.




\bibitem {lawson}
H{.} B{.} Jr{.} Lawson, M{.}-L{.} Michelsohn.
\newblock  Spin geometry. 
\newblock Princeton Mathematical Series, 38{.} Princeton University Press, Princeton, NJ, 1989.





\bibitem{nico}  
L.I. Nicolaescu.
\newblock Lectures on the geometry of manifolds{.} Second edition. 
\newblock World Scientific Publishing Co{.} Pte{.} Ltd{.}, Hackensack, NJ, 2007.



\bibitem{pigo}
L{.} F{.} Pessoa, S{.} Pigola, A{.} G{.} Setti. 
Dirichlet parabolicity and L$^{1}$--Liouville property under localized geometric conditions.
\newblock J{.} Funct{.} Anal{.} 273 (2017), 652--693.


\bibitem{simon}
 B{.} Simon.
\newblock A canonical decomposition for quadratic forms with applications to monotone convergence theorems.
\newblock J{.} Funct{.} Anal{.} 28 (1978), no{.} 3, 377--385.


\bibitem{stein}
E{.} M{.} Stein.
\newblock Singular Integrals and Differentiability of Functions, Princeton University.
\newblock Press, Princeton, New Jersey, 1970.


\bibitem{stollmann1}
P{.} Stollmann.
\newblock Formtechniken bei Schr\"odingeroperatoren. 
\newblock Diploma thesis, Munich 1985. 

\bibitem{stollmann2}
P{.} Stollmann, J{.} Voigt.
\newblock Perturbation of Dirichlet forms by measures. 
\newblock Potential Anal{.} 5 (1996), no{.} 2, 109--138. 


\bibitem{stroock}
D{.} Stroock.
\newblock The Kac approach to potential theory: Part I. 
\newblock J. Mathematics and Mechanics. vol. 16. no. 8. 829--852 (1967).



\end{thebibliography}
\end{document}